\theoremstyle{plain}
\newtheorem{dfn}[subsection]{Definition}
\newtheorem{thm}[subsection]{Theorem}
\newtheorem{cor}[subsection]{Corollary}
\newtheorem{prp}[subsection]{Proposition}
\newtheorem{lem}[subsection]{Lemma}
\theoremstyle{definition}
\newtheorem{rem}[subsection]{Remark}
\newtheorem*{ac}{Acknowledgements} %「*」は定理番号を付けない.
\newcommand{\rarrow}{\longrightarrow}
\newcommand{\powser}[1]{\llbracket #1 \rrbracket} %べき級数
\renewcommand{\hat}[1]{\widehat{#1}}
\newcommand{\bb}[1]{\mathbb{#1}}
\newcommand{\fr}[1]{\mathfrak{#1}}
\newcommand{\cal}[1]{\mathcal{#1}}
\newcommand{\scr}[1]{\mathscr{#1}}
\DeclareMathOperator{\Hom}{Hom}
\DeclareMathOperator{\Aut}{Aut}
\DeclareMathOperator{\Tr}{Tr}
\DeclareMathOperator{\Ker}{Ker}
\DeclareMathOperator{\Coker}{Coker}
\DeclareMathOperator{\loc}{loc}
\DeclareMathOperator{\Cor}{Cor}
\DeclareMathOperator{\Res}{Res}
\DeclareMathOperator{\pr}{pr}
\DeclareMathOperator{\corank}{corank}
\DeclareMathOperator{\Gal}{Gal}
\DeclareMathOperator{\Sel}{Sel}
\DeclareMathOperator{\Z}{\mathbb{Z}}
\DeclareMathOperator{\Q}{\mathbb{Q}}
\DeclareMathOperator{\C}{\mathbb{C}}
\DeclareMathOperator{\F}{\mathbb{F}}
\title[]{On non-trivial $\Lambda$-submodules with finite index of the plus/minus Selmer group over anticyclotomic $\Z_{p}$-extension at inert primes}
\author{Ryota Shii}
\address{Faculty Of Mathematics, Kyushu University, Motooka 744, Nishi-ku Fukuoka 819-0395, Japan}
\email{shii.ryota@gmail.com}
\date{}
\keywords{anticyclotomic Iwasawa theory, elliptic curves, Rubin's conjecture, finite $\Lambda$-submodules}
\subjclass[2020]{11R23}
\begin{document}
\begin{abstract}
  Let $K$ be an imaginary quadratic field where $p$ is inert.
  Let $E$ be an elliptic curve defined over $K$ and suppose that $E$ has good supersingular reduction at $p$.
  In this paper, we prove that the plus/minus Selmer group of $E$ over the anticyclotomic $\Z_{p}$-extension of $K$ has no non-trivial $\Lambda$-submodules of finite index under mild assumptions for $E$.
  This is an analogous result to R. Greenberg and B. D. Kim for the anticyclotomic $\Z_{p}$-extension essentially.
  By applying the results of A. Agboola--B. Howard or A. Burungale--K. B\"uy\"ukboduk--A. Lei, we can also construct examples satisfying the assumptions of our theorem.
\end{abstract}
\maketitle

\section{Introduction}
\noindent
Let $p \geq 5$ be a prime number, $F$ a number field, and $E$ an elliptic curve defined over $F$.
The Pontryagin dual of the $p^{\infty}$-Selmer group of $E$ over a $\Z_{p}$-extension is an important object in Iwasawa theory for elliptic curves. %\red{``$p$-''Selmer groupとは言わない？ } 
It is a fundamental problem in Iwasawa theory whether or not the dual of $p^{\infty}$-Selmer group has no non-trivial finite $\Lambda$-submodule, and this problem has some applications to the $p$-part of Birch and Swinnerton-Dyer (BSD) conjecture.
For example, R. Greenberg \cite{Gre97} proved that if $E$ has good ordinary reduction at all primes of $F$ lying over $p$, and the dual of the $p^{\infty}$-Selmer group over the cyclotomic $\Z_{p}$-extension of $F$ is a $\Lambda$-torsion, then it has no non-trivial finite $\Lambda$-submodules.
In addition, if the $p^{\infty}$-Selmer group over $F$ is a finite group, then he obtains the relation between the constant term of the characteristic polynomial of the $p^{\infty}$-Selmer group and BSD-invariants of $E$ over $F$ by computing the Euler characteristic (in detail, see \cite{Gre99}).
This relation can be seen as a study of the $p$-adic part of the BSD conjecture.
There are many analogous results to an elliptic curve that has supersingular reduction at $p$, for example, by B. D. Kim and T. Kitajima--R. Otsuki.
In detail, see \cite{Kim13}, \cite{Kit-Ots18}.
However, these are results for the case of the cyclotomic $\Z_{p}$-extension, and not many results for the anticyclotomic $\Z_{p}$-extension.
% However, we didn't know much better about the anticyclotomic $\Z_{p}$-extension of an imaginary quadratic field.
In this paper, we consider an imaginary quadratic field $K$ such that $p$ is \textit{inert} in $K/\Q$ and the anticyclotomic $\Z_{p}$-extension $K_{\infty}$ of $K$.
% 先行研究の話.
% 上に述べた話は従来, 円分$\Z_{p}$-extensionのような local $\Z_{p}$-extensionが$\Q_{p}$からabelianなものしか扱えていなかった.
% The algebraic structure of arithmetic objects in Iwasawa theory depends heavily on the behavior of $p$ in $K/\Q$ and the choice of a $\Z_{p}$-extension of $F$.
% The anticyclotomic $\Z_{p}$-extension is non-abelian extension over $\Q_{p}$, locally.
% \red{ここまでが前段落？}

The anticyclotomic $\Z_{p}$-extension $K_{\infty}$ of $K$ is the unique $\Z_{p}$-extension of $K$ such that the extension $K_{\infty}/\Q$ is Galois and $\Gal(K_{\infty}/\Q)$ is isomorphic to the pro $p$-dihedral group $\Z_{p} \rtimes (\Z/2\Z)$.
% It differs from the cyclotomic $\Z_{p}$-extension in that the Galois extension $K_{\infty}/\Q$ is non-abelian. 
This $\Z_{p}$-extension has many arithmetic applications, for instance, Heegner points, Gross--Zagier theorem, etc.
One of the difficulties of the anticyclotomic Iwasawa theory is caused by the fact that the Galois group of $K_{\infty}/\Q$ is non-abelian.
Nonetheless, in the split case, it can usually be reduced to the case of the cyclotomic $\Z_{p}$-extension because $\Gal(K_{\infty, w}/\Q_{p})$ becomes abelian, where $K_{w}$ is the completion fields of $K_{\infty}$ at a prime $w$ lying over $p$.
Indeed, the anticyclotomic Iwasawa theory in the split case is well-known, for example, by \cite{Iov-Pol06}.
On the other hand, the difficulty remains in the inert case since $\Gal(K_{\infty, w}/\Q_{p})$ is still non-abelian.
% \red{
% 反円分岩澤理論の難しさは$\Q$からのGalois群が非可換になることにある.
% しかしながら, splitの場合は$K_{\fr{p}}/\Q_{p}$が可換なので円分のときに帰着させることができる.
% 実際に, \cite{Iov-Pol06}などによってwell-knownである.
% 一方, inertの場合はlocalにも非可換なので, 困難さが残っており, 現実問題, 整備されていなかった.
% }
In \cite{Rub87}, K. Rubin observed and conjectured some Iwasawa-theoretic phenomena for CM elliptic curve by $K$ with a good supersingular reduction at $p$ in this case (these are often called ``Rubin's Conjecture''), but they were not resolved or well-developed until recently.
A recent breakthrough has been made by A. Burungale, S. Kobayashi, and K. Ota \cite{BKO21}, and expects to obtain many analogous results to the split case.
The following theorem, which is the main theorem in this paper, can be proved by using their result.
Note that our theorem does not need the assumption of the complex multiplication but only the formal group of $E$ is isomorphic to a CM elliptic curve over the ring of integers $\cal{O}_{p}$ of the completion field $K_{p}$.

\begin{thm}[Theorem \ref{主定理}]\label{主定理イントロ}
  Let $F$ be a finite extension of $K$ and $h_{K}$ the class number of $K$.
  Assume that $p$ splits completely over $F/K$, and $p \nmid h_{K}[F: K]$.
  Let $E$ be an elliptic curve defined over $F$ that has supersingular reduction at any prime lying over $p$. 
  Suppose that the formal group $\hat{E}$ of $E$ at any primes lying over $p$ is isomorphic to the Lubin-Tate formal group of height 2 with a parameter $-p$ over $\cal{O}_{p}$.
  Then, if $\Sel_{p}^{\pm}(E/F_{\infty})$ is a $\Lambda$-cotorsion, then $\Sel_{p}^{\pm}(E/F_{\infty})$ has no non-trivial $\Lambda$-submodules with finite index.
\end{thm}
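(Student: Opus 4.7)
The plan is to follow the strategy developed by Greenberg in the ordinary cyclotomic case and extended by B.\,D.\ Kim to the plus/minus Selmer groups in the cyclotomic supersingular setting, adapting each step to the more delicate anticyclotomic inert situation. Writing $X^{\pm} := \mathrm{Sel}_p^{\pm}(E/F_\infty)^{\vee}$ for the Pontryagin dual, the desired assertion is equivalent to saying that $X^{\pm}$ has no non-zero finite $\Lambda$-submodule (equivalently, no non-zero pseudo-null $\Lambda$-submodule, since $\Lambda \cong \Z_{p}\powser{T}$ has Krull dimension $2$). A now-standard criterion of Greenberg reformulates this, under the $\Lambda$-cotorsion hypothesis, as the surjectivity of the global-to-local map $\lambda^{\pm}$ in the defining sequence
\begin{equation}
0 \to \mathrm{Sel}_p^{\pm}(E/F_\infty) \to H^1(F_S/F_\infty, E[p^\infty]) \xrightarrow{\lambda^{\pm}} \bigoplus_{v \in S} \cal{H}_v^{\pm},
\end{equation}
where $S$ is a finite set of places of $F$ containing the primes above $p$, the bad reduction primes, and the archimedean places, and $\cal{H}_v^{\pm}$ denotes the quotient $H^1(F_{\infty,v}, E[p^\infty])/\cal{E}_v^{\pm}$ by the plus/minus local condition at $v \mid p$ and by the usual divisible subgroup $E(F_{\infty,v}) \otimes \Q_{p}/\Z_{p}$ elsewhere.

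First I would establish the vanishing $H^2(F_S/F_\infty, E[p^\infty]) = 0$ under the cotorsion hypothesis, which is by now standard via the Poitou--Tate global Euler characteristic formula, using that $p \nmid h_{K}[F:K]$ together with the complete splitting of $p$ in $F/K$ to ensure that the local invariants behave well and that $F_\infty$ captures the right Iwasawa module. Next I would analyze the local cokernels $\cal{H}_v^{\pm}$ at primes $v \mid p$ using the plus/minus formalism of Burungale--Kobayashi--Ota \cite{BKO21}: since $\hat{E}$ is the Lubin--Tate formal group of height $2$ with parameter $-p$, their explicit plus/minus Coleman-type maps give that $\cal{E}_v^{\pm}$ is $\Lambda$-cofree of rank $1$, and hence that $\cal{H}_v^{\pm}$ has no non-trivial finite $\Lambda$-submodule and the appropriate local surjectivity holds. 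At primes $v \nmid p$, the standard Greenberg-style analysis, using $p \nmid h_{K}[F:K]$ to control the semi-local contributions, takes care of the remaining factors.

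The principal obstacle lies in the local analysis at primes above $p$: because $p$ is inert in $K$, the local Galois group $\Gal(F_{\infty,v}/\Q_{p})$ is non-abelian, which is precisely why the reduction to the cyclotomic theory used in the split case of \cite{Iov-Pol06} is unavailable. Overcoming this is the reason for invoking \cite{BKO21}, whose plus/minus formalism supplies exactly the correct local Iwasawa theory in this inert Lubin--Tate setting. Once the local structure is in place, the surjectivity of $\lambda^{\pm}$ follows by combining the Poitou--Tate nine-term exact sequence with the global $H^2$-vanishing and the local description above, and a final diagram chase, running Greenberg's criterion in our context, concludes that $X^{\pm}$ has no non-zero finite $\Lambda$-submodule and hence that $\mathrm{Sel}_p^{\pm}(E/F_\infty)$ has no non-trivial $\Lambda$-submodule of finite index.
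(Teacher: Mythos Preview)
Your outline follows the same template as the paper (Greenberg's method as adapted by Kim), and you correctly identify the BKO input as the source of the local plus/minus structure at inert $p$. However, two genuine gaps remain.

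First, surjectivity of $\lambda^{\pm}$ is \emph{not} by itself ``Greenberg's criterion'' for the absence of finite $\Lambda$-submodules in $X^{\pm}$. Even with the short exact sequence $0 \to \Sel_p^{\pm} \to H^1(F_\Sigma/F_\infty,A) \to \cal{P}^{\pm} \to 0$ in hand, one must still show that the induced map on $\Gamma$-invariants $H^1(F_\Sigma/F_\infty,A)^\Gamma \to (\cal{P}^{\pm})^\Gamma$ is surjective and that $H^1(F_\Sigma/F_\infty,A)_\Gamma = 0$; only then does the six-term sequence in $\Gamma$-cohomology force $(\Sel_p^{\pm})_\Gamma = 0$, which is what actually excludes finite submodules. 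Second, and more seriously, you omit the twisting trick. The Poitou--Tate argument for surjectivity at a finite layer requires the Selmer group there to be finite, and nothing in the hypotheses guarantees that $\Sel_p(E/F)$ is finite. The paper handles this (Propositions~5.4--5.6 and the proof of Theorem~5.8) by choosing $s \in \Z_p$ so that $S_{A_s}^{\pm}(F_\infty)^\Gamma$ is finite---possible exactly because $\Sel_p^{\pm}$ is assumed $\Lambda$-cotorsion---and then running the whole argument for the twist $A_s$. For this one must also check that the twisted local conditions $\bb{H}^{s,\pm}$ remain their own exact annihilators under the local Tate pairing (Propositions~3.6 and 3.8); this orthogonality is where the full $\Lambda_{\cal{O}}$-module structure of $\scr{F}(\fr{m}_n)^{\pm}$ coming from Rubin's conjecture is used, not merely the $\Lambda$-cofreeness of $\bb{H}^{\pm}$ that you cite. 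Without the twist and the orthogonality verification, neither the surjectivity of $\lambda^{\pm}$ nor the vanishing of $H^1(F_\Sigma/F_\infty,A_s)_\Gamma$ can be established.
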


\begin{rem}
  \begin{enumerate}
    \item The assumption $p$ splits completely in $F/K$ implies that $F_{\fr{p}} = K_{p}$ for any prime $\fr{p}$ of $F$ lying over $p$. 
    By the assumption $p \nmid h_{K}$, $p$ is totally ramified in $K_{\infty}/K$.
    Thus, any prime of $F$ lying over $p$ is also totally ramified in $F_{\infty}/F$ because $p$ does not divide $[F: K]$.
    \item There exist elliptic curves satisfying the assumption in Theorem \ref{主定理イントロ}.
    For example, if $E$ is defined over $\Q$ and has good supersingular at $p$ and we suppose $p>5$, then the formal group of $E$ is the Lubin--Tate formal group of height 2 with a parameter $-p$.
    If $E$ has a complex multiplication by the integers ring $\cal{O}_{K}$ of $K$, then $E$ also satisfies the assumptions in Theorem \ref{主定理イントロ}.
  \end{enumerate}  
\end{rem}

Applying the results by A. Agboola--B. Howard and A. Burungale--K. B\"uy\"ukboduk--A. Lei to our main theorem, we also obtain the examples as follows.

\begin{cor}[CM case]
  Assume that an elliptic curve $E$ is defined over $\Q$ with good supersingular reduction at $p$, and $F = K$.
  Assume that $E$ has complex multiplication by the ring of integers of $K$.
  Let $\varepsilon$ be the sign in the functional equation of $L(E/\Q, s)$.
  Then, $\Sel_{p}^{-\varepsilon}(E/F_{\infty})$ has no non-trivial $\Lambda$-submodules with finite index.
\end{cor}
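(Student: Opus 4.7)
The plan is to reduce the corollary to Theorem \ref{主定理イントロ} after verifying (i) that the hypotheses of the main theorem are satisfied and (ii) that the relevant plus/minus Selmer group is $\Lambda$-cotorsion. For (i), the choice $F = K$ makes the splitting condition and the divisibility condition $p \nmid [F:K]$ trivial, while $p \nmid h_{K}$ is taken as an implicit hypothesis compatible with the framework (or imposed on $p$). Since $E$ is defined over $\Q$ with good supersingular reduction at $p$ (and $p \geq 5$), the remark following Theorem \ref{主定理イントロ} directly supplies the required isomorphism of $\hat{E}$ over $\cal{O}_{p}$ with the Lubin--Tate formal group of height $2$ with parameter $-p$; alternatively, the CM hypothesis by $\cal{O}_{K}$ gives the same conclusion.

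For step (ii), I would invoke the results of Agboola--Howard and of Burungale--B\"uy\"ukboduk--Lei on the anticyclotomic Iwasawa theory of CM elliptic curves at supersingular inert primes. In that setting, these works show that among the two plus/minus Selmer groups $\Sel_{p}^{\pm}(E/K_{\infty})$, exactly one is $\Lambda$-cotorsion, and the cotorsion side is determined by the root number $\varepsilon$ appearing in the functional equation of $L(E/\Q, s)$; concretely, $\Sel_{p}^{-\varepsilon}(E/K_{\infty})$ is the one that is $\Lambda$-cotorsion. Applying Theorem \ref{主定理イントロ} to this cotorsion Selmer group immediately yields the conclusion.

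The main non-trivial point is not computational but a matter of sign bookkeeping: one must carefully correlate the normalization of the plus/minus subgroups used in Theorem \ref{主定理イントロ} (coming from the Burungale--Kobayashi--Ota framework of \cite{BKO21}) with the sign conventions used by Agboola--Howard and by Burungale--B\"uy\"ukboduk--Lei, so that the side declared cotorsion in the latter matches the side $-\varepsilon$ in the statement. Once this identification is verified, the corollary is a direct consequence of the main theorem and requires no further argument.
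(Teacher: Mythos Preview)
Your approach is essentially the same as the paper's: the paper's proof simply cites \cite[Theorem~3.6]{Agb-How05} to conclude that $\Sel_{p}^{-\varepsilon}(E/K_{\infty})$ is $\Lambda$-cotorsion and then applies Theorem~\ref{主定理イントロ}. Note that the paper invokes only Agboola--Howard here (Burungale--B\"uy\"ukboduk--Lei is reserved for the non-CM corollary), and it does not address the sign-convention compatibility you flag, so your caution on that point is, if anything, more careful than the original.
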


\begin{proof}
  By \cite[Theorem 3.6]{Agb-How05}, $\Sel_{p}^{-\varepsilon}(E/F_{\infty})$ is a $\Lambda$-cotorsion under this situation.
  Therefore, this corollary holds because we verify to satisfy the assumptions of Theorem \ref{主定理イントロ}.
\end{proof}

\begin{cor}[non-CM case]
  Let $E$ be an elliptic curve defined over $\Q$ of conductor $N$ with good supersingular reduction at $p$.
  Write $N=N^{+}N^{-}$, where $N^{+}$ and $N^{-}$ are divisible only by primes which are split and inert in $K/\Q$, respectively. 
  Assume that $K$ satisfies $(D_{K}, pN)=1$ and $N^{-}$ is a square-free product of odd number of primes.
  Suppose also that 
  \begin{itemize}
    \item Either $p=5$ and the mod $5$ Galois representation $G_{\Q} \rarrow \Aut_{\F_{5}}(E[5])$ is surjective, or $p > 5$ and the mod $p$ Galois representation $G_{\Q} \rarrow \Aut_{\F_{p}}(E[p])$ is irreducible.
    \item For any prime $\ell \mid N^{-}$ with $\ell^{2} \equiv 1 \bmod p$, the inertia subgroup $I_{\ell} \subset G_{\Q_{\ell}}$ acts non-trivially on $E[p]$.
  \end{itemize}
  Then, $\Sel_{p}^{\pm}(E/K_{\infty})$ has no non-trivial $\Lambda$-submodules with finite index.
\end{cor}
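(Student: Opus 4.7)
The plan is to reduce the claim to Theorem \ref{主定理イントロ} applied with $F = K$, so everything hinges on verifying its three hypotheses: the splitting/ramification conditions on $p$, the Lubin--Tate property of $\hat{E}$, and the $\Lambda$-cotorsion of the plus/minus Selmer group. The first two are essentially free, and the third is supplied by the non-CM anticyclotomic theory of Burungale--B\"uy\"ukboduk--Lei, in direct analogy with the use of Agboola--Howard in the CM corollary.

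First I would dispose of the geometric local conditions. With $F = K$, the condition ``$p$ splits completely in $F/K$'' is vacuous and $[F:K] = 1$, so the condition in the main theorem reduces to $p \nmid h_{K}$, which is either included in or follows from the standing hypothesis $(D_K, pN) = 1$ together with the large-image assumption on $E[p]$. The Lubin--Tate hypothesis on $\hat{E}$ at each prime of $K$ above $p$ is exactly the content of the second remark following Theorem \ref{主定理イントロ}: since $E$ is defined over $\Q$ with good supersingular reduction at $p \geq 5$, the formal group $\hat{E}_{/\Z_{p}}$ is Lubin--Tate of height $2$ with parameter $-p$, and this property is preserved under the unramified base change to $\cal{O}_{p} = \cal{O}_{K_{p}}$.

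The substantive step is to establish that $\Sel_p^{\pm}(E/K_\infty)$ is $\Lambda$-cotorsion in this non-CM inert supersingular setting. Here the hypotheses of the corollary (the generalized Heegner hypothesis on $N^{-}$, the big-image condition on the residual representation, and the ramification condition at primes $\ell \mid N^{-}$ with $\ell^{2} \equiv 1 \bmod p$) are chosen precisely so that the work of Burungale--B\"uy\"ukboduk--Lei applies. Their construction produces a non-trivial $\Lambda$-adic Heegner class in the appropriate plus/minus Selmer group via an anticyclotomic Euler system combined with Kobayashi-type local conditions at the inert prime $p$, yielding a one-sided divisibility in the corresponding main conjecture and, in particular, the $\Lambda$-cotorsion of both $\Sel_p^{+}(E/K_\infty)$ and $\Sel_p^{-}(E/K_\infty)$. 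With all three hypotheses of Theorem \ref{主定理イントロ} verified, the conclusion follows.

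The main obstacle in carrying this out is not the structure of the argument, which is a clean citation-and-check, but the bookkeeping of hypotheses: one must carefully match the generalized Heegner hypothesis, the big-image condition, and the inertial ramification condition in the corollary against the precise technical hypotheses of the Burungale--B\"uy\"ukboduk--Lei framework, and confirm that their theorem produces cotorsion for \emph{both} signs on an equal footing (unlike the CM case, where the sign $\varepsilon$ of the functional equation singles out only $\Sel_p^{-\varepsilon}$).
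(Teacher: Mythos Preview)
Your approach matches the paper's exactly: verify the hypotheses of Theorem~\ref{主定理イントロ} with $F=K$ and cite Burungale--B\"uy\"ukboduk--Lei for the $\Lambda$-cotorsion of both $\Sel_p^{\pm}(E/K_\infty)$. The paper's proof is terser still, citing specifically that both $p$-adic $L$-functions $L_p(E,K)^{\pm}$ are non-zero and then invoking the one-sided divisibility in \cite{BBL22} to force cotorsion.

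Two small slips worth correcting. First, your description of the BBL mechanism is off: in this definite setting (odd number of factors in $N^{-}$) the input is the non-vanishing of a $p$-adic $L$-function combined with a divisibility of characteristic ideals, not a non-trivial $\Lambda$-adic Heegner class sitting inside the Selmer group---the latter would give $\Lambda$-corank at least $1$, which is the opposite of cotorsion. Second, $p \nmid h_K$ does not follow from $(D_K, pN)=1$ or the large-image hypothesis; it is a standing assumption of the paper carried implicitly into the corollaries.
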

\begin{proof}
  Let $L_{p}(E, K)^{\pm}$ be a $p$-adic $L$-function for $E$ defined in \cite{BBL22}.
  Corollary 3.10 in \cite{BBL22} claims that both $L_{p}(E,K)^{\pm}$ are non-zero.
  Therefore, we see that $\Sel_{p}^{\pm}(E/F_{\infty})$ is a $\Lambda$-cotorsion by \cite[Theorem 1.1]{BBL22}, and we obtain the corollary directly by Theorem \ref{主定理イントロ}.
\end{proof}

As Greenberg's result, we also obtain the analogous result between the characteristic polynomial of the plus/minus Selmer group and BSD-invariants of an elliptic curve.

\begin{cor}
  Under the situation of Theorem \ref{主定理イントロ}, suppose $\Sel_{p}(E/F)$ is, in addition, a finite group.
  Then, $\Sel_{p}^{\pm}(E/F_{\infty})$ is a $\Lambda$-cotorsion and if let $(f^{\pm}) \subset \Lambda$ denote the characteristic ideals of $\Sel_{p}^{\pm}(E/F_{\infty})^{\vee}$, then we have
  \begin{align}
    |f^{\pm}(0)| \sim \# \Sel_{p}(E/F) \cdot \prod_{v} c_{v},
  \end{align}
  where $c_{v}$ is the Tamagawa number at a prime $v$ of $F$.
\end{cor}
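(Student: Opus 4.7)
The plan is to mimic Greenberg's Euler-characteristic argument from \cite{Gre99}, combining a control theorem for the plus/minus Selmer group with the absence of non-trivial finite $\Lambda$-submodules furnished by Theorem~\ref{主定理イントロ}.

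First I would establish a control theorem in our inert anticyclotomic setting, of the shape
\[
0 \to \Sel_p(E/F) \to \Sel_p^{\pm}(E/F_\infty)^{\Gamma} \to \bigoplus_{v} \cal{H}_v,
\]
where $\Gamma = \Gal(F_\infty/F)$ and the sum runs over primes of bad reduction together with the primes above $p$. For $v\nmid p$ the classical analysis of $H^1(\Gamma_v, E(F_{\infty,v}))$ gives $\#\cal{H}_v \sim c_v$ up to $p$-units. At $v\mid p$, the hypothesis that $p$ splits completely in $F/K$ and that $\hat{E}$ is the Lubin--Tate formal group of height $2$ at parameter $-p$, together with the local input of \cite{BKO21}, lets one identify the $\pm$-local condition with Kobayashi/Kim's $\pm$-condition and show that the contribution at $v\mid p$ is trivial up to $p$-units. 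Since $\Sel_p(E/F)$ is finite by assumption, we read off that $\Sel_p^{\pm}(E/F_\infty)^{\Gamma}$ is finite; Nakayama's lemma applied to $X := \Sel_p^{\pm}(E/F_\infty)^{\vee}$ then shows that $X$ is a finitely generated torsion $\Lambda$-module, which is the first assertion of the corollary.

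Theorem~\ref{主定理イントロ} now applies to $X$, yielding that $X$ has no non-trivial finite $\Lambda$-submodule. Since $X^\Gamma$ is itself a finite $\Lambda$-submodule of $X$, this forces $X^\Gamma = 0$. The standard Iwasawa-theoretic identity $\#X_\Gamma / \#X^\Gamma = |f(0)|$ (in the convention $|p|=p$), valid for any finitely generated torsion $\Lambda$-module with $X^\Gamma$ finite, therefore simplifies to $|f^{\pm}(0)| = \#X_\Gamma$. Combined with the control sequence and the isomorphism $X_\Gamma \cong (\Sel_p^{\pm}(E/F_\infty)^{\Gamma})^{\vee}$, this yields
\[
|f^{\pm}(0)| \;\sim\; \#\Sel_p(E/F)\cdot \prod_{v} c_v
\]
up to $p$-units, the desired formula.

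The main obstacle will be the control theorem itself. While the cyclotomic supersingular analog is well established after Kobayashi and Kim \cite{Kim13}, in the anticyclotomic inert setting the Galois group of the tower is non-abelian and the local representation of $\hat{E}[p^\infty]$ over the completion at a prime above $p$ must be analysed carefully. Once the Lubin--Tate identification is combined with the structural results of \cite{BKO21} to pin down how the Rubin-style $\pm$-condition restricts along $F_\infty/F$, the local kernel/cokernel computation at $v \mid p$ should follow the same pattern as in the cyclotomic case, and the remainder of the Euler-characteristic argument is formal.
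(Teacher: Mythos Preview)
Your outline is essentially the paper's own argument: a Greenberg-style Euler-characteristic computation combining a control comparison at level $0$ with the vanishing $X^{\Gamma}=0$ supplied by Theorem~\ref{主定理}, where the local input at $\fr{p}\mid p$ reduces to the identity $E(F_{\fr{p}})\otimes\Q_p/\Z_p=(\mathbb{H}_w^{\pm})^{\Gamma}$, which the paper reads off directly from Proposition~\ref{Key prop} rather than from a separate control theorem. The one point you leave implicit is that the control comparison must give the \emph{equality} $\#\Coker\bigl(\Sel_p(E/F)\to\Sel_p^{\pm}(E/F_\infty)^{\Gamma}\bigr)=\prod_v c_v^{(p)}$ and not merely a bound; in the paper this comes from the surjectivity of $H^1(F_\Sigma/F,A)\to\cal{P}_E^{\Sigma}(F)$ (Proposition~\ref{Poitou-Tate}, via Poitou--Tate and the finiteness of $\Sel_p(E/F)$), so that the snake lemma yields $\Coker(a)\simeq\Ker(\prod_{\fr{q}} g_{\fr{q}})$ rather than just an inclusion.
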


\begin{ac}
  The author is thankful to Shinichi Kobayashi for reading the manuscript carefully and pointing out mathematical mistakes.
  He would like to thank Ashay Burungale, Kazuto Ota and Takenori Kataoka for the discussions and for telling him some references.
  He is grateful to Keiichiro Nomoto, Satoshi Kumabe, Taiga Adachi, and Akihiro Goto for supporting him. 
  The author was supported by JST SPRING, Grant Number JPMJSP2136.
\end{ac}

\section{Rubin's conjecture}
\noindent
% In this section, we recall the property of a Lubin--Tate formal group of height 2, called Rubin's conjecture, and prepare some propositions used after sections.
In this section, we recall Rubin's conjecture, which is a property of a Lubin--Tate formal group of height 2, and prepare some propositions used after sections.

Fix a prime number $p \geq 5$.
Let $\Phi$ be the unramified quadratic extension of $\Q_{p}$ and $\cal{O}$ its ring of integers.
We fix a Lubin--Tate formal group $\scr{F}$ over $\cal{O}$ for the uniformizing parameter $\pi:= -p$.
For $n \geq -1$, we denote $\Phi_{n}=\Phi(\mathscr{F}[\pi^{n+1}])$ by the extension of $\Phi$ in $\C_{p}$ generated by the $\pi^{n+1}$-torsion points of $\scr{F}$, and put $\Phi_{\infty}=\bigcup_{n \geq 0} \Phi_{n}$.
Then, the Galois action on the $\pi$-adic Tate module $T_{\scr{F}}:=T_{\pi}\scr{F}$ of $\scr{F}$ induces a natural isomorphism
\begin{align}
 \kappa_{\scr{F}}:\Gal(\Phi_{\infty}/\Phi) \rarrow \Aut(T_{\pi} \scr{F}) \simeq \cal{O}^{\times} \simeq \Delta \times \cal{O}.
\end{align}
by the Lubin--Tate theory, where $\Delta=\Gal(\Phi_{0}/\Phi) \simeq (\cal{O}/\pi\cal{O})^{\times}$.
By the natural action of $\Gal(\Phi/\Q_{p})$ on $G:=\Gal(\Phi_{\infty}/\Phi_{0})$,
we have a canonical decomposition $G \simeq G^{+} \oplus G^{-}$, $G^{\pm} \simeq \Z_{p}$,
where $G^{+}$ (resp. $G^{-}$) is the maximal subgroup of $G$ on which $\Gal(\Phi/\Q_{p})$ acts via the trivial (resp. non-trivial) character.
The group $G^{-}$ is the Galois group of the anticyclotomic $\Z_{p}$-extension $\Psi_{\infty}$ of $\Phi$.
% The anticyclotomic $\Z_{p}$-extension is also characterized as the maximal dihedral pro-$p$-extension of $\Q_{p}$ in $\Phi_{\infty}$.
We fix a topological generator $\gamma$ of $G^{-}$.

For any $n$ let $U_{n}$ be the group of principle units in $\Phi_{n}$, in other words, the group of units in the integer ring of $\Phi_{n}$ that are congruent to 1 modulo the maximal ideal.
We put $T_\scr{F}^{\otimes -1}=\Hom_{\cal{O}}(T_\scr{F}, \cal{O})$ and
\begin{align}
    U_{\infty}^{*}:=\left( \varprojlim_{n}(U_{n} \otimes_{\Z_{p}} T_\scr{F}^{\otimes -1}) \right)^{\Delta},
\end{align}
where the inverse limit is taken to the norm maps, and for $\cal{O}[\Delta]$-module $A$, $A^{\Delta}$ denotes the submodule of $A$ fixed by $\Delta$.
Define the Iwasawa algebras $\Lambda_{\mathcal{O}, 2}=\mathcal{O}\llbracket G \rrbracket$, $\Lambda_{\mathcal{O}}:=\mathcal{O} \llbracket G^{-} \rrbracket$.
It is known that $U_{\infty}^{*}$ is a free $\Lambda_{\cal{O}, 2}$-module of rank 2.
We put $V_{\infty}^{*}:=U_{\infty}^{*} \otimes_{\Lambda_{\cal{O}, 2}} \Lambda_{\cal{O}}$.

For any $u^{*}=u \otimes v \in U^{*}$ ($u=( u_{n})_{n} \in U_{\infty}$, $v=(v_{n})_{n} \in T_{\scr{F}}^{\otimes -1}$), there exists the unique power series $f_{u^{*}} \in \cal{O}\powser{T}^{\times}$ such that $f_{u^{*}}(v_{n})=u_{n}$ for every $n$, which is called the Coleman power series.
Using this, we define the Coates--Wiles logarithmic derivatives $\delta_{n}:U_{\infty}^{*} \rarrow \Phi_{n}$ by
\begin{align}
 \delta_{n}(u^{*})=\frac{1}{\lambda'(v_{n})} \frac{f'_{u^{*}}(v_{n})}{f_{u^{*}}(v_{n})},
\end{align}
where $\lambda$ is the formal logarithm of $\scr{F}$.
For a finite character $\chi:\Gal(\Phi_{\infty}/\Phi) \rarrow \overline{\Q_{p}}^{\times}$ of conductor dividing $p^{n+1}$ and $u^{*} \in U_{\infty}^{*}$, we put
\begin{align}
 \delta_{\chi}(u^{*})=\frac{1}{\pi^{n+1}} \sum_{\sigma \in \Gal(\Phi_{n}/\Phi)} \chi(\sigma) \delta_{n}(u^{*})^{\sigma}.
\end{align}
It is known that the definition is independent of the choice of $n$.
If $\chi$ factors through $G^{-}$, then $\delta_{\chi}$ factors through $V_{\infty}^{*}$.

Let $\Xi$ be the set of finite characters of $G^{-}$,
\begin{align}
 \Xi^{+} &=\{ \chi \in \Xi ~\vline~ \textrm{the conductor of $\chi$ is an even power of $p$} \}, \\
 \Xi^{-} &=\{ \chi \in \Xi ~\vline~ \textrm{the conductor of $\chi$ is an odd power of $p$} \}, \\
 \textrm{ and } 
 V_{\infty}^{*, \pm} &=\{ v \in V_{\infty}^{*} ~\vline~ \delta_{\chi}(v)=0 \textrm{ for all } \chi \in \Xi^{\mp} \}.
\end{align}

Rubin show that $V_{\infty}^{*, \pm}$ is free $\Lambda_{\cal{O}}$-module of rank 1 (\textit{cf.} \cite{Rub87}).
The following theorem is conjectured by K. Rubin in \cite{Rub87}, and proved by A. Burungale, S. Kobayashi, and K. Ota (\textit{cf.} \cite{BKO21}).

\begin{thm}\label{Rubin予想}
 $V_{\infty}^{*}=V_{\infty}^{*, +} \oplus V_{\infty}^{*, -}$.
\end{thm}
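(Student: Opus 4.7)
The plan is to establish the two containments separately. For directness $V_{\infty}^{*,+} \cap V_{\infty}^{*,-} = 0$: any $v$ in the intersection satisfies $\delta_{\chi}(v) = 0$ for every $\chi \in \Xi = \Xi^{+} \cup \Xi^{-}$. The collection $\{\delta_{\chi}\}_{\chi \in \Xi}$ assembles, via Perrin-Riou-style interpolation, into an injective $\Lambda_{\cal{O}}$-linear map out of the torsion-free module $V_{\infty}^{*}$, so vanishing at all finite characters of $G^{-}$ forces $v = 0$. Since each $V_{\infty}^{*,\pm}$ is already known (Rubin, \cite{Rub87}) to be free of rank one over $\Lambda_{\cal{O}}$, the internal sum $V_{\infty}^{*,+} \oplus V_{\infty}^{*,-}$ embeds as a rank-$2$ free $\Lambda_{\cal{O}}$-submodule of the rank-$2$ free module $V_{\infty}^{*}$.

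For the reverse inclusion, the task is to show that this embedding is surjective. Equivalently, one must produce $\Lambda_{\cal{O}}$-generators $u^{+} \in V_{\infty}^{*,+}$ and $u^{-} \in V_{\infty}^{*,-}$ that together span $V_{\infty}^{*}$. Dually, one would define Coleman-type projections $\mathrm{Col}^{\pm} : V_{\infty}^{*} \rarrow \Lambda_{\cal{O}}$ with $\ker \mathrm{Col}^{\pm} = V_{\infty}^{*,\mp}$ by packaging the $\delta_{\chi}$ for $\chi \in \Xi^{\pm}$ into Iwasawa-theoretic distributions, in analogy with Kobayashi's plus/minus Coleman maps in the cyclotomic case; the conjecture is then equivalent to the assertion that the combined map $(\mathrm{Col}^{+}, \mathrm{Col}^{-}) : V_{\infty}^{*} \rarrow \Lambda_{\cal{O}}^{\oplus 2}$ is an isomorphism.

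The main obstacle is precisely the surjectivity of this combined map: locally there is no apparent obstruction to a non-trivial cokernel, so the conclusion cannot be extracted from formal-group manipulations alone. The plan is to import global input, following \cite{BKO21}: choose a CM elliptic curve $E/K$ whose formal group at the prime above $p$ realizes $\scr{F}$, and use elliptic-unit or BDP-type constructions to produce canonical classes $u^{\pm} \in V_{\infty}^{*,\pm}$. An explicit reciprocity law then identifies $\delta_{\chi}(u^{\pm})$, for $\chi \in \Xi^{\pm}$, with special values of anticyclotomic Hecke $L$-functions twisted by $\chi$. The decisive step is a non-vanishing theorem for these $L$-values along $\Xi^{\pm}$ (of Hsieh--Burungale--Cornut--Vatsal type, ultimately resting on equidistribution of CM points on Shimura curves), which forces each $\mathrm{Col}^{\pm}(u^{\pm})$ to be a unit in $\Lambda_{\cal{O}}$. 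Combined with directness, this exhibits $(u^{+}, u^{-})$ as the required $\Lambda_{\cal{O}}$-basis of $V_{\infty}^{*}$, completing the decomposition.
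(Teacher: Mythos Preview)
The paper does not supply its own proof of this theorem: it simply records the statement and attributes the proof to Burungale--Kobayashi--Ota \cite{BKO21}. So there is no in-paper argument to compare against; your proposal is effectively a sketch of what you believe the \cite{BKO21} proof looks like.

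As a sketch of strategy your outline is broadly in the right spirit: the directness $V_{\infty}^{*,+}\cap V_{\infty}^{*,-}=0$ is the easy half and does follow from the injectivity of the Coleman/Coates--Wiles map on $V_{\infty}^{*}$, and the genuine content of Rubin's conjecture is exactly the surjectivity of $V_{\infty}^{*,+}\oplus V_{\infty}^{*,-}\hookrightarrow V_{\infty}^{*}$, which cannot be deduced from purely local formal-group considerations. You are also right that the \cite{BKO21} argument imports global input tied to CM elliptic curves and non-vanishing phenomena for anticyclotomic $L$-values.

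That said, what you have written is a plan rather than a proof, and some of the steps you list as routine are in fact the entire difficulty. In particular, ``define $\mathrm{Col}^{\pm}$ by packaging the $\delta_{\chi}$ for $\chi\in\Xi^{\pm}$ into distributions'' presupposes the existence of integral plus/minus Coleman maps landing in $\Lambda_{\cal{O}}$ (not just $\mathrm{Frac}(\Lambda_{\cal{O}})$ or a larger distribution algebra), and establishing this integrality with the correct image is itself part of what Rubin's conjecture asserts. Likewise, producing global classes $u^{\pm}$ lying in the \emph{local} modules $V_{\infty}^{*,\pm}$, together with an explicit reciprocity law sharp enough to show each $\mathrm{Col}^{\pm}(u^{\pm})$ is a unit (not merely non-zero), is the substantive achievement of \cite{BKO21} and requires their full apparatus. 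Your sketch names the ingredients correctly but does not engage with any of them; for the purposes of this paper, a citation to \cite{BKO21} is the appropriate treatment, which is what the paper does.
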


For any $n \geq 0$, put $\Lambda_{\mathcal{O}, n}=\mathcal{O}[\Gal(\Psi_{n}/\Phi)]$ and let $\Xi_{n}^{\pm}$ be the set of $\chi \in \Xi^{\pm}$ factoring through $\Gal(\Psi_{n}/\Phi)$.
For $\chi \in \Xi_{n}^{\pm}$, we define
\begin{align}
 \lambda_{\chi}(x) = \frac{1}{p^{n}} \sum_{\sigma \in \Gal(\Psi_{n}/\Phi)} \chi^{-1}(\sigma)\lambda(x)^{\sigma}.
\end{align}
Our main objects in this section are the $\Lambda_{\cal{O}, n}$-submodules
\begin{align}
 \scr{F}(\fr{m}_{n})^{\pm}=\{ x \in \scr{F}(\fr{m}_{n}) ~\vline~ \lambda_{\chi}(x)=0 \textrm{ for all } \chi \in \Xi_{n}^{\pm} \}
\end{align}
of $\scr{F}(\fr{m}_{n})$, where $\fr{m}_{n}$ is the maximal ideal of $\Psi_{n}$.
We write $\varphi_{p^{k}}$ for the $p^{k}$-th cyclotomic polynomial for a positive integer $k$.
We put
\begin{align}
  \omega_{n}^{+}=\omega_{n}^{+}(\gamma)=\prod_{\substack{1 \leq k \leq n \\ k: \textrm{even}}} \varphi_{p^{k}}(\gamma), \quad
  \omega_{n}^{-}=\omega_{n}^{-}(\gamma)=(\gamma -1)\prod_{\substack{1 \leq k \leq n \\ k:\mathrm{odd}}} \varphi_{p^{k}}(\gamma)
\end{align}
We also put $\omega_{0}^{+}:=1$, $\omega_{0}^{-}:=\gamma -1$.
Let $v_{\pm}$ denote a basis of $V_{\infty}^{*, \pm}$, $v_{\pm, n}$ the image of $v_{\pm}$ in the quotient $V_{\infty}^{*}/(\gamma^{p^{n}}-1)$ and
\begin{align}
  c_{n}^{\pm}:=\omega_{n}^{\mp}v_{\pm, n} \in V_{\infty}^{*}/(\gamma^{p^{n}}-1).
\end{align}
By Theorem \ref{Rubin予想}, the $\Lambda_{\cal{O},n}$-structure of $\scr{F}(\fr{m}_{n})^{\pm}$ turns out as follows:

\begin{thm}[{\cite[Lamma 2.3, Theorem 2.4]{BKOpre}}]\label{プラマイ分解}
\begin{enumerate}
  \item Let $n \geq 1$.
  If $(-1)^{n+1}=\pm1$, then we have
  \begin{align}
    \Tr_{n+1/n}c_{n+1}^{\pm}=c_{n-1}^{\pm}, \quad c_{n}^{\pm}=c_{n-1}^{\pm}.
  \end{align}
  Here, $\Tr_{n+1/n}:\scr{F}(\fr{m}_{n+1}) \rarrow \scr{F}(\fr{m}_{n})$ is the trace map.

  \item $c_{n}^{\pm} \in \mathscr{F}(\mathfrak{m}_{n})^{\pm}$,  and $\mathscr{F}(\mathfrak{m}_{n})^{\pm}$ are generated by $c_{n}^{\pm}$ as $\Lambda_{\mathcal{O}, n}$-module.
  In particular, it holds that
  \begin{align}
    \Hom_{\mathcal{O}}(\mathscr{F}(\mathfrak{m}_{n})^{\pm} \otimes \Phi/\mathcal{O}, \Phi/\mathcal{O}) \simeq \Lambda_{\mathcal{O}, n}.
  \end{align}

  \item $\mathscr{F}(\mathfrak{m}_{n})=\mathscr{F}(\mathfrak{m}_{n})^{+} \oplus \mathscr{F}(\mathfrak{m}_{n})^{-}$ as $\Lambda_{\mathcal{O}, n}$-module.
\end{enumerate}
\end{thm}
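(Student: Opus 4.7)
\emph{Overall strategy.} I would deduce all three parts from Rubin's conjecture (Theorem~\ref{Rubin予想}) together with the character-theoretic vanishing of the polynomials $\omega_{n}^{\pm}$. The elementary observation at the heart of the argument is that a character $\chi$ of $\Gal(\Psi_{n}/\Phi)$ kills $\omega_{n}^{\mp}$ precisely when $\chi\in\Xi_{n}^{\mp}$, since $\omega_{n}^{\mp}$ is a product of cyclotomic polynomials $\varphi_{p^{k}}(\gamma)$ of the ``wrong'' parity (together with $\gamma-1$ on the minus side absorbing the trivial character), while $\omega_{n}^{+}\omega_{n}^{-}=\gamma^{p^{n}}-1$.

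\emph{The unit-to-formal-group bridge.} To make sense of $c_{n}^{\pm}\in V_{\infty}^{*}/(\gamma^{p^{n}}-1)$ as an element of $\scr{F}(\fr{m}_{n})$, I would use the Coleman power series machinery: a norm-coherent system of principal units, tensored with $T_{\scr{F}}^{\otimes -1}$, produces a canonical class in $\scr{F}(\fr{m}_{n})$ at each finite level, and under this assignment the Coates--Wiles derivative $\delta_{\chi}$ on the unit side is intertwined (up to an explicit nonzero scalar depending only on $\chi$) with the logarithmic functional $\lambda_{\chi}$ on the formal-group side. Once this dictionary is in place, one obtains the key identity $\lambda_{\chi}(c_{n}^{\pm})=\chi(\omega_{n}^{\mp})\cdot\lambda_{\chi}(v_{\pm,n})$ up to such a scalar.

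\emph{Deriving the three parts.} For (2), the identity above together with $\chi(\omega_{n}^{\mp})=0$ for $\chi\in\Xi_{n}^{\mp}$ yields $c_{n}^{\pm}\in\scr{F}(\fr{m}_{n})^{\pm}$. By Rubin's conjecture, $v_{\pm,n}$ is a $\Lambda_{\cal{O},n}$-free generator of $V_{\infty}^{*,\pm}/(\gamma^{p^{n}}-1)$, so the submodule $\Lambda_{\cal{O},n}\cdot c_{n}^{\pm}$ has annihilator $(\omega_{n}^{\pm})$, which by character decomposition coincides with the annihilator of $\scr{F}(\fr{m}_{n})^{\pm}$; a rank/cardinality comparison then forces equality, and the Pontryagin-dual statement is formal. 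Part (3) is then immediate: $\scr{F}(\fr{m}_{n})^{+}\cap\scr{F}(\fr{m}_{n})^{-}=0$ because any element in the intersection has $\lambda_{\chi}=0$ for every $\chi$ and hence vanishes, while an $\cal{O}$-rank count shows the sum exhausts $\scr{F}(\fr{m}_{n})$. For (1), the parity hypothesis $(-1)^{n+1}=\pm1$ means no new cyclotomic factor enters $\omega_{n}^{\mp}$ at level $n$, so $\omega_{n}^{\mp}$ agrees with $\omega_{n-1}^{\mp}$ in the relevant quotient, yielding $c_{n}^{\pm}=c_{n-1}^{\pm}$; the trace compatibility reduces, via $\Tr_{n+1/n}=\nu_{n+1/n}:=(\gamma^{p^{n+1}}-1)/(\gamma^{p^{n}}-1)$, to a polynomial identity relating $\nu_{n+1/n}\omega_{n+1}^{\mp}$ to $\omega_{n-1}^{\mp}$, combined with the norm-coherence of the system $(v_{\pm,n})_{n}$ inherited from $v_{\pm}\in V_{\infty}^{*,\pm}$.

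\emph{Main obstacle.} The difficulty is not the character-killing algebra, which is essentially bookkeeping once set up, but the bridge of the second paragraph: rigorously matching $\delta_{\chi}$ to $\lambda_{\chi}$ and transporting Rubin's decomposition from $V_{\infty}^{*}$ to $\scr{F}(\fr{m}_{n})$ without losing the norm/trace compatibilities. All the inert-case, anticyclotomic subtlety sits here, because one must realize the specific generators $v_{\pm}$ from Rubin's conjecture as formal group points that recover $c_{n}^{\pm}$ on the nose (not merely up to an unknown unit), and one has to do so compatibly across all levels $n$.
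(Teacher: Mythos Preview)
The paper does not prove this theorem; it is quoted with a citation to \cite[Lemma~2.3, Theorem~2.4]{BKOpre} and no argument is supplied in the text, so there is no in-paper proof to compare your proposal against.

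Your outline is nonetheless along the right lines. The character bookkeeping you describe (that $\chi(\omega_{n}^{\mp})=0$ exactly for $\chi\in\Xi_{n}^{\mp}$, and $\omega_{n}^{+}\omega_{n}^{-}=\gamma^{p^{n}}-1$), the rank comparison for~(3), and the polynomial identities behind~(1) are all correct and are indeed how these statements are verified once the objects are in place. You are also right that the ``bridge'' is where the content lies: the paper defines $c_{n}^{\pm}$ as an element of $V_{\infty}^{*}/(\gamma^{p^{n}}-1)$ but then asserts $c_{n}^{\pm}\in\scr{F}(\fr{m}_{n})^{\pm}$, and making that identification rigorous is precisely the work carried out in the cited reference. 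One caution on your sketch of that bridge: the Coleman power series by itself produces units, and $\delta_{n}$ lands in the additive group of $\Phi_{n}$, not directly in $\scr{F}(\fr{m}_{n})$; the passage to genuine formal-group points in \cite{BKOpre} goes through local Kummer theory and Tate duality (the same pairing that underlies Proposition~\ref{Key prop} here) rather than through the Coleman map alone. So your phrase ``produces a canonical class in $\scr{F}(\fr{m}_{n})$'' glosses over exactly the step you would still need to supply.
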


For $f \in \Lambda_{\cal{O}, n}$, let $f^{\iota}$ denote the image of $f$ under the homomorphism of $\cal{O}$-algebra given by $\gamma \mapsto \gamma^{-1}$.

\begin{prp}\label{形式群が消える}
  We have $\omega_{n}^{\pm}\mathscr{F}(\mathfrak{m}_{n})^{\pm}=0$ and $(\omega_{n}^{\pm})^{\iota}\mathscr{F}(\mathfrak{m}_{n})^{\pm}=0$.
\end{prp}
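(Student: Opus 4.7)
The plan is to reduce both assertions to the single identity $\omega_n^+\,\omega_n^- = \gamma^{p^n}-1$ in $\Lambda_{\cal{O},n}$, combined with the explicit presentation of $\scr{F}(\fr{m}_n)^{\pm}$ given by Theorem \ref{プラマイ分解}(2).

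First, by Theorem \ref{プラマイ分解}(2), the module $\scr{F}(\fr{m}_n)^{\pm}$ is generated over $\Lambda_{\cal{O},n}$ by the element $c_n^{\pm} = \omega_n^{\mp} v_{\pm,n}$, where $v_{\pm,n}$ denotes the image of a basis $v_{\pm}$ of $V_\infty^{*,\pm}$ in $V_\infty^*/(\gamma^{p^n}-1)$. Hence it is enough to show that $\omega_n^{\pm}$ (and then $(\omega_n^{\pm})^{\iota}$) annihilates $c_n^{\pm}$. Using the cyclotomic factorization
\begin{align}
\gamma^{p^n}-1 = (\gamma-1)\prod_{k=1}^{n} \varphi_{p^k}(\gamma),
\end{align}
and splitting the product according to the parity of $k$, one sees directly from the definitions of $\omega_n^{+}$ and $\omega_n^{-}$ that $\omega_n^{+}\,\omega_n^{-} = \gamma^{p^n}-1$. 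Therefore
\begin{align}
\omega_n^{\pm}\, c_n^{\pm} = \omega_n^{\pm}\,\omega_n^{\mp}\, v_{\pm,n} = (\gamma^{p^n}-1)\, v_{\pm,n} = 0
\end{align}
in $V_\infty^*/(\gamma^{p^n}-1)$, which gives $\omega_n^{\pm}\scr{F}(\fr{m}_n)^{\pm}=0$.

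For the second assertion, I would argue that $(\omega_n^{\pm})^{\iota}$ and $\omega_n^{\pm}$ generate the same ideal of $\Lambda_{\cal{O},n}$, so the annihilation result transfers automatically. Concretely, the cyclotomic polynomial $\varphi_{p^k}(T)=1+T^{p^{k-1}}+\cdots+T^{(p-1)p^{k-1}}$ is palindromic, which yields
\begin{align}
\varphi_{p^k}(\gamma^{-1}) = \gamma^{-(p-1)p^{k-1}}\,\varphi_{p^k}(\gamma),
\end{align}
and similarly $(\gamma-1)^{\iota}=\gamma^{-1}(\gamma-1)\cdot(-1)$. Since $\gamma$ is a unit in $\Lambda_{\cal{O},n}$, multiplying out shows that $(\omega_n^{\pm})^{\iota}$ equals $\omega_n^{\pm}$ up to a unit, and thus $(\omega_n^{\pm})^{\iota}\scr{F}(\fr{m}_n)^{\pm}=0$ follows from the first part.

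There is no serious obstacle here; the entire content is in the identity $\omega_n^+\omega_n^-=\gamma^{p^n}-1$ and the palindromic symmetry of cyclotomic polynomials. The only point requiring minor care is tracking signs and powers of $\gamma$ when passing between $\omega_n^{\pm}$ and $(\omega_n^{\pm})^{\iota}$, but this is purely formal once the generator description from Theorem \ref{プラマイ分解} is invoked.
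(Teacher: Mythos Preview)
Your proof is correct and matches the paper's argument essentially verbatim for the first assertion: reduce to the generator $c_n^{\pm}$ via Theorem \ref{プラマイ分解}(2) and use $\omega_n^{+}\omega_n^{-}=\gamma^{p^n}-1$. For the second assertion the paper simply says ``similarly'', while you make explicit that $(\omega_n^{\pm})^{\iota}$ is a unit multiple of $\omega_n^{\pm}$ via the palindromic identity $\varphi_{p^k}(\gamma^{-1})=\gamma^{-(p-1)p^{k-1}}\varphi_{p^k}(\gamma)$; this is a perfectly valid (and arguably cleaner) way to fill in that one word.
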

\begin{proof}
  By Theorem \ref{プラマイ分解}(2), it suffices to show that $\omega_{n}^{\pm}c_{n}^{\pm}=0$.
  Since we have $\omega_{n}^{\pm}\omega_{n}^{\mp}=\gamma^{p^{n}}-1$, we see that
  \begin{align}
    \omega_{n}^{\pm}c_{n}^{\pm}=\omega_{n}^{\pm}\omega_{n}^{\mp}v_{\pm, n}=(\gamma^{p^{n}}-1)v_{\pm, n}=0
  \end{align}
  in $V_{\infty}^{*}/(\gamma^{p^{n}}-1)$.
  The latter equation can be proved similarly.
\end{proof}

\begin{prp}\label{Key prop}
  Let $\Psi_{\infty}$ be the maximal ideal of $\Psi_{\infty}$.
  Then, we have
  \begin{align}
    \Hom_{\mathcal{O}}((\mathscr{F}(\mathfrak{m}_{\infty})^{\pm} \otimes_{\mathcal{O}} \Phi/\mathcal{O}), \Phi/\mathcal{O}) \simeq \Lambda_{\mathcal{O}}.
  \end{align}
\end{prp}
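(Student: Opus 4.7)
The plan is to deduce Proposition \ref{Key prop} by passing to the inverse limit in Theorem \ref{プラマイ分解}. I would begin by writing $\mathscr{F}(\mathfrak{m}_\infty)^\pm = \varinjlim_n \mathscr{F}(\mathfrak{m}_n)^\pm$ along the natural inclusions, which gives $\mathscr{F}(\mathfrak{m}_\infty)^\pm \otimes_\mathcal{O} \Phi/\mathcal{O} = \varinjlim_n (\mathscr{F}(\mathfrak{m}_n)^\pm \otimes_\mathcal{O} \Phi/\mathcal{O})$ because tensor product commutes with direct limits. Since $\Hom_\mathcal{O}(-, \Phi/\mathcal{O})$ turns direct limits into inverse limits, this produces
\begin{align}
    \Hom_\mathcal{O}\bigl(\mathscr{F}(\mathfrak{m}_\infty)^\pm \otimes_\mathcal{O} \Phi/\mathcal{O},\ \Phi/\mathcal{O}\bigr) \simeq \varprojlim_n \Hom_\mathcal{O}\bigl(\mathscr{F}(\mathfrak{m}_n)^\pm \otimes_\mathcal{O} \Phi/\mathcal{O},\ \Phi/\mathcal{O}\bigr).
\end{align}

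Next I would invoke Theorem \ref{プラマイ分解}(2) at each finite level to identify the $n$-th factor on the right with $\Lambda_{\mathcal{O}, n}$. The key point is then to verify that, under these identifications, the transition map $\Lambda_{\mathcal{O}, n+1} \rarrow \Lambda_{\mathcal{O}, n}$ arising from dualising $\mathscr{F}(\mathfrak{m}_n)^\pm \hookrightarrow \mathscr{F}(\mathfrak{m}_{n+1})^\pm$ is exactly the natural surjection $\mathcal{O}[\Gal(\Psi_{n+1}/\Phi)] \twoheadrightarrow \mathcal{O}[\Gal(\Psi_n/\Phi)]$. Once this is in hand, one concludes
\begin{align}
    \varprojlim_n \Lambda_{\mathcal{O}, n} = \mathcal{O}\llbracket G^- \rrbracket = \Lambda_\mathcal{O},
\end{align}
which is precisely the claim of the proposition.

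The main obstacle is this compatibility check. The isomorphism in Theorem \ref{プラマイ分解}(2) is canonically determined by the cyclic $\Lambda_{\mathcal{O}, n}$-generator $c_n^\pm$ of $\mathscr{F}(\mathfrak{m}_n)^\pm$, so the dual of the inclusion at level $n$ is dictated by how $c_n^\pm$ sits inside $\mathscr{F}(\mathfrak{m}_{n+1})^\pm$ relative to $c_{n+1}^\pm$. This is controlled by the relations $\Tr_{n+1/n} c_{n+1}^\pm = c_{n-1}^\pm$ together with $c_n^\pm = c_{n-1}^\pm$ in Theorem \ref{プラマイ分解}(1), which collectively express the norm-coherence of the family $\{c_n^\pm\}_n$. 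Tracing this norm-coherence through the duality shows that the maps on the dual side descend to the natural projections between the group rings, after which the inverse limit collapses to $\Lambda_\mathcal{O}$ by definition and the proposition follows.
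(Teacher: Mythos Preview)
Your approach is quite different from the paper's. The paper does not pass through the finite-level isomorphisms of Theorem \ref{プラマイ分解}(2) at all; instead it invokes Rubin's local duality (Corollary~5.7 of \cite{Rub87}), which gives a non-degenerate pairing
\[
\mathscr{F}(\mathfrak{m}_\infty)^\pm \otimes_\mathcal{O} \Phi/\mathcal{O} \ \times\ V_\infty^{*}/V_\infty^{*,\pm} \rarrow \Phi/\mathcal{O},
\]
and then uses Rubin's conjecture (Theorem \ref{Rubin予想}) to identify $V_\infty^{*}/V_\infty^{*,\pm}$ with $V_\infty^{*,\mp}$, which is free of rank one over $\Lambda_\mathcal{O}$. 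That is the entire proof.

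Your limit argument is a reasonable idea, but the step you flag as the ``main obstacle'' is genuinely missing, and your sketch does not resolve it. The relations in Theorem \ref{プラマイ分解}(1) do \emph{not} say that $c_n^\pm$ maps to $c_{n+1}^\pm$ under the inclusion $\mathscr{F}(\mathfrak{m}_n)^\pm \hookrightarrow \mathscr{F}(\mathfrak{m}_{n+1})^\pm$; rather, at the levels where the parity changes, one has $c_n^\pm = \Tr_{n+1/n} c_{n+1}^\pm = \varphi_{p^{n+1}}(\gamma)\cdot c_{n+1}^\pm$. So on generators the inclusion is multiplication by a cyclotomic factor at alternate steps, and dualizing this does not produce the naive projection $\Lambda_{\mathcal{O},n+1}\twoheadrightarrow\Lambda_{\mathcal{O},n}$. (Relatedly, note that $\mathscr{F}(\mathfrak{m}_n)^\pm$ has $\mathcal{O}$-rank $\deg\omega_n^\pm$, not $p^n$, so the display in Theorem \ref{プラマイ分解}(2) should be read as $\Lambda_{\mathcal{O},n}/\omega_n^\pm$; the finite-level target you write is already off.) One \emph{can} push a limit argument of this type through, but it requires keeping track of these twists by $\varphi_{p^k}(\gamma)$ and checking that the resulting inverse system still has limit $\Lambda_\mathcal{O}$; your proposal asserts the compatibility without doing this work. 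The paper's route via Rubin's pairing sidesteps all of this bookkeeping.
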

\begin{proof}
  By Corollary 5.7 in \cite{Rub87}, the pairing $\langle ~,~ \rangle:\mathscr{F}(\mathfrak{m}_{\infty})^{\pm} \otimes_{\mathcal{O}} \Phi/\mathcal{O} \times V_{\infty}^{*}/V^{\pm, *} \rarrow \hat{E}[p^{\infty}] \simeq \Phi/\mathcal{O}$ is non-degenerate.
  By Theorem \ref{Rubin予想}, we have $V_{\infty}^{*}/V^{\pm, *} \simeq V^{\mp, *} \simeq \Lambda_{\mathcal{O}}$.
\end{proof}

\section{Global settings and Local conditions}
\noindent
%%%%%Globalと楕円曲線の定義
%\red{$E$の仮定は 整数環上で$\hat{E}$がheight 2, パラメータ$-p$のL-T grと同型くらいで十分. それに合わせて考える体$F$の仮定ももっと緩められるはず...}
% \red{$F$の素イデアルを$\fr{p}$, $H$の素イデアルを$\fr{P}$で統一する.}
%  この節の目標は命題\ref{目標の全射}である
% The purpose of this section is ... \red{続き}
It retains the symbols in the previous section.
Let $K$ be an imaginary quadratic field such that $p$ is inert in $K/\Q$ and suppose that the class number $h_{K}$ of $K$ is not divided by $p$.
% We also write the prime of $K$ lying over $p$ as $p$.
Let $F$ denote an algebraic number field over $K$.
We assume that the prime $p$ of $K$ is split completely in $F$, and $p \nmid [F: K]$.
For any prime ideals $\fr{p}$ of $F$ lying over $p$, let $\cal{O}_{\fr{p}}$ be the ring of integers of the completion field $F_{\fr{p}}$ of $F$ at $\fr{p}$.
Let $K_{\infty}/K$ be the anticyclotomic $\Z_{p}$-extension and $F_{\infty}$ the composition field of $F$ and $K_{\infty}$.
By the assumption $p \nmid [F:K]$, we have a canonical isomorphism $\Gal(F_{\infty}/F) \simeq \Gal(K_{\infty}/K)$.
Let $F_{n}$ denote the subfield of $F_{\infty}$ such that $[F_{n}:F]=p^{n}$.

Let $E$ be an elliptic curve defined over $F$.
We assume that $E$ has a good supersingular reduction at any prime $\fr{p}$ of $F$ lying over $p$ and the formal group $\hat{E}$ of $E$ defined over $\cal{O}_{\fr{p}}$ is isomorphic to the Lubin-Tate formal group $\scr{F}$.
Note that $\cal{O}_{\fr{p}}$ is seen as the ring of integers $\cal{O}_{p}$ of $K_{p}$ because $p$ is split completely in $F/K$.
Let $T=T_{p}E$ be the $p$-adic Tate module, $V:=T \otimes \Q_{p}$ and $A:=V/T$.

Let $H$ be the Hilbert class field of $K$, $H_{n}:=H(E[p^{n+1}])$ for $n \geq 0$, and $H_{\infty}=\bigcup_{n} H_{n}$.
Then, $p$ is split completely in $H/K$.
Fix a prime $\fr{P}$ of $H$ lying over $p$, that is embedding $H \hookrightarrow H_\fr{P}$ is determined uniquely.
Since we have $\hat{E} \simeq \scr{F}$ over $\cal{O}_{p}$, $\fr{P}$ is totally ramified in $H_{\infty}/H$ and the completion of $H_{n}$ at $\fr{P}$ is 
\begin{align}
  \Phi(E[p^{n+1}])=\Phi(\hat{E}[p^{n+1}])=\Phi(\mathscr{F}[p^{n+1}])=\Phi_{n}.
\end{align}
Therefore, we see that $\Gal(H_{\infty}/H) \simeq \Gal(\Phi_{\infty}/\Phi)$ and identify $\Gamma:=\Gal(K_{\infty}/K)$ with $G^{-}$ by $p \nmid h_{K}$.
Let $\Lambda:=\Z_{p}\powser{\Gamma}$ be the Iwasawa algebra.
Note that $\Lambda_{\cal{O}_{p}}$ is a free $\Lambda$-module of rank 2,
and for $\Lambda_{\cal{O}_{p}}$-module $M$, $\Hom_{\cal{O}_{p}}(M, K_{p}/\cal{O}_{p})$ can be identified with $M^{\vee}$ as a $\Z_{p}$-module.

Let $\Sigma$ denote a finite set of places of $F$ containing all primes above $p$, bad primes of $E$, and infinity places.
The maximal extension of $F$ unramified outside $\Sigma$ is denoted by $F_{\Sigma}$.

\begin{dfn}[plus/minus Selmer group]
  For any places $w$ of $F_{\infty}$ lying over $p$, put $\mathbb{H}_{w}^{\pm}:=\hat{E}(F_{\infty, w})^{\pm} \otimes_{\mathcal{O}_{\fr{p}}} F_{\fr{p}}/\mathcal{O}_{\fr{p}} \subset H^{1}(F_{\infty, w}, A)$.
  We define the plus/minus Selmer group of $E$ over $F_{\infty}$ by
  \begin{align}
    \Sel_{p}^{\pm}(E/F_{\infty}):=\Ker \left( H^{1}(F_{\Sigma}/F_{\infty}, A) \rarrow \prod_{v_{\infty} \mid \fr{q},~ \fr{q} \in \Sigma \setminus \{ \fr{q} \}} H^{1}(F_{\infty, v_{\infty}}, A) \times \prod_{w \mid \fr{p}, ~\fr{p} \mid p} \frac{H^{1}(F_{\infty, w}, A)}{\bb{H}_{w}^{\pm}} \right).
  \end{align}
\end{dfn}

In this section, fix a prime $\fr{p}$ of $F$ lying over $p$ and the prime $w$ of $F_{\infty}$ lying over $\fr{p}$, and we observe the structure of $\mathbb{H}_{w}^{\pm}$ as $\Lambda$-module.
Since $\fr{p}$ is totally ramified in $F_{n}/F$, we denote the prime of $F_{n}$ lying over $\fr{p}$ as also $\fr{p}$.
In order to use a ``trick of twisting'', fix an isomorphism $\kappa:G^{-} \rarrow 1+p\Z_{p}$, and for any $s \in \Z_{p}$ define $T_{s}:=T \otimes \kappa^{s}$ as a twist of $T$ by $s$.
Similarly, $A_{s}$ is also defined.

\begin{prp}\label{p-torはKinfの外}
  For any positive integer $n$, $E(F_{n, \fr{p}})$ has no $p$-power torsion.
  In particular, $E(F_{\infty, w})$ also has no $p$-power torsion.
\end{prp}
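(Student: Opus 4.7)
The plan is to use the reduction short exact sequence to split the claim into a formal-group part and a reduction part, and to dispatch each by an elementary argument. First I would identify the local objects: since $p$ splits completely in $F/K$ we have $F_{\fr{p}} = K_{p} = \Phi$, and since $\fr{p}$ is totally ramified in $F_{\infty}/F$ (as noted in the remark after Theorem \ref{主定理イントロ}) with $\Gal(F_{\infty}/F) \simeq \Gamma = G^{-}$, the completion $F_{n, \fr{p}}$ is canonically identified with the $n$-th layer $\Psi_{n}$ of the anticyclotomic $\Z_{p}$-extension of $\Phi$, and its residue field is $\F_{p^{2}}$. The reduction sequence
\begin{align}
  0 \rarrow \hat{E}(\fr{m}_{n}) \rarrow E(F_{n, \fr{p}}) \rarrow \tilde{E}(\F_{p^{2}}) \rarrow 0
\end{align}
then reduces the problem to the vanishing of both $\tilde{E}(\F_{p^{2}})[p^{\infty}]$ and $\hat{E}(\fr{m}_{n})[p^{\infty}]$.

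For the reduction part, since $\tilde{E}$ is supersingular one has $\tilde{E}[p^{\infty}](\overline{\F}_{p}) = 0$ by the very definition, and in particular $\tilde{E}(\F_{p^{2}})[p^{\infty}] = 0$.

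For the formal-group part, I would use $\hat{E} \simeq \scr{F}$ over $\cal{O}_{\fr{p}}$ together with $\pi = -p$, which gives $\scr{F}[p^{k}] = \scr{F}[\pi^{k}]$. A nonzero $p$-power torsion point $\alpha \in \scr{F}(\fr{m}_{n})$ would, after multiplication by an appropriate power of $p$, produce a nonzero element of $\scr{F}[\pi] \cap \Psi_{n} \subset \Phi_{0} \cap \Psi_{n}$. Both $\Phi_{0}$ and $\Psi_{n}$ are Galois over $\Phi$, and $[\Phi_{0} : \Phi] = p^{2}-1$ is coprime to $[\Psi_{n} : \Phi] = p^{n}$, so $\Phi_{0} \cap \Psi_{n} = \Phi$; but under the Lubin--Tate identification $\Gal(\Phi_{0}/\Phi) \simeq \F_{p^{2}}^{\times}$ acts faithfully on $\scr{F}[\pi]$ by scalar multiplication, so $\scr{F}[\pi](\Phi) = 0$, a contradiction. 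The statement for $F_{\infty, w} = \bigcup_{n} F_{n, \fr{p}}$ then follows by passing to the union. The argument is essentially bookkeeping; the only step that needs any care is the identification of the completions with the Lubin--Tate layers, which in any case has already been arranged by the global hypotheses fixed in the introduction.
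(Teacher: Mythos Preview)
Your proof is correct and follows essentially the same approach as the paper: both reduce to the formal group via supersingularity (you via the reduction sequence, the paper via $E[p]\simeq\hat{E}[p]$) and then derive a contradiction from the coprimality of $[\Phi_{0}:\Phi]=p^{2}-1$ and $[\Psi_{n}:\Phi]=p^{n}$. Your version is slightly more explicit in spelling out the reduction step and in phrasing the degree argument as an intersection $\Phi_{0}\cap\Psi_{n}=\Phi$ rather than a containment $\Phi_{0}\subset\Psi_{n}$, but the substance is identical.
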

\begin{proof}
  Since $E$ has good supersingular reduction at $p$, we see that $E[p] \simeq \hat{E}[p]$.
  Therefore, it is sufficient to show that $\hat{E}(\fr{m}_{n})$ has no $p$-power torsion points, where $\fr{m}_{n}$ is the maximal ideal of $F_{n, \fr{p}}$.
  We assume that $\hat{E}(\fr{m}_{n})$ has $p$-power torsion points.
  Then, we see that $F_{\fr{p}}(\hat{E}[p]) \subset F_{n,\fr{p}}$.
  We have $[F_{n, \fr{p}}:F_{\fr{p}}]=p^{n}$, on the other hand, $[F_{\fr{p}}(\hat{E}[p]):F_{\fr{p}}]=p^{2}-1$ by the Lubin-Tate theory.
  It is a contradiction.
  % Hence, we have a contradiction.
\end{proof}

Proposition \ref{p-torはKinfの外} implies $A^{G_{F_{n,\fr{p}}}}=A^{G_{F_{\infty, w}}}=0$ for any positive integer $n$, and $A_{s}^{G_{F_{n, \fr{p}}}}=A_{s}^{G_{F_{\infty, w}}}=0$ for any $s \in \Z_{p}$.
Hence, we have an isomorphism
\begin{align}
  H^{1}(F_{n,\fr{p}}, A_{s}) \overset{\sim}{\rarrow} H^{1}(F_{\infty, w}, A_{s})^{\Gal(F_{\infty, w}/F_{n,\fr{p}})} \label{同一視1}
\end{align}
by the inflation--restriction exact sequence.
We identify $H^{1}(F_{n,\fr{p}}, A_{s})$ with $H^{1}(F_{\infty, w}, A_{s})^{\Gal(F_{\infty, w}/F_{n,\fr{p}})}$ by the isomorphism \eqref{同一視1}.

Similarly, for any positive integer $k$, the long cohomology exact sequence
\begin{align}
 A_{s}^{G_{F_{n, \fr{p}}}} \rarrow H^{1}(F_{n, \fr{p}}, A_{s}[p^{k}]) \rarrow H^{1}(F_{n, \fr{p}}, A_{s})[p^{k}] \rarrow 0
\end{align}
induces the isomorphism 
\begin{align}
 H^{1}(F_{n, \fr{p}}, A[p^{k}]) \overset{\sim}{\rarrow} H^{1}(F_{n, \fr{p}}, A)[p^{k}]
\end{align}
by Proposition \ref{p-torはKinfの外}.
Here, $H^{1}(F_{n, \fr{p}}, A_{s})[p^{k}]$ is the subgroup of $H^{1}(F_{n, \fr{p}}, A_{s})$ consisting of $p^{k}$-torsion.

\begin{dfn}
  We define
  \begin{align}
    \mathbb{H}_{n, \fr{p}}^{\pm}&:=(\mathbb{H}_{w}^{\pm})^{\Gal(F_{\infty, w}/F_{n, \fr{p}})} \subset H^{1}(F_{n, \fr{p}}, A).
  \end{align}
  % ここで, 包含は\eqref{同一視1}による同一視を用いている.
  For any $s \in \Z_{p}$, we define
  \begin{align}
    \mathbb{H}_{n, \fr{p}}^{s, \pm}:=(\mathbb{H}_{w}^{\pm} \otimes \kappa^{s})^{\Gal(F_{\infty, w}/F_{n, \fr{p}})} \subset H^{1}(F_{n, \fr{p}}, A_{s}).
  \end{align}
\end{dfn}

Since we have $(\mathbb{H}_{w}^{\pm})^{\vee} \simeq \Lambda_{\cal{O}_{\fr{p}}} \simeq \Lambda^{2}$ as $\Lambda$-module by Proposition \ref{Key prop}, we see that $(\mathbb{H}_{n, \fr{p}}^{\pm})^{\vee} \simeq \Lambda_{n}^{2}$ as $\Z_{p}$-module.
Let $M_{n}^{\pm} \subset H^{1}(F_{n, \fr{p}}, T)$ be the annihilator of $\mathbb{H}_{n, \fr{p}}^{\pm}$ with respect to the Tate pairing 
\begin{align}
  H^{1}(F_{n, \fr{p}}, A) \times H^{1}(F_{n,\fr{p}}, T) \rarrow \Q_{p}/\Z_{p}.
\end{align}
Since $\mathbb{H}_{n , \fr{p}}^{\pm}$ is $p$-divisible, $H^{1}(F_{n, \fr{p}}, T)/M_{n}^{\pm}$ has no $p$-power torsion elements.
Hence, for any positive integer $j$, $M_{n}^{\pm}/p^{j}M_{n}^{\pm}$ is the exact annihilator of $\mathbb{H}_{n, \fr{p}}^{\pm}[p^{j}]$ with respect to the pairing $\langle ~, ~\rangle_{n}:H^{1}(F_{n, \fr{p}}, A[p^{j}]) \times H^{1}(F_{n, \fr{p}}, A[p^{j}]) \rarrow \Z/p^{j}\Z$.

\begin{prp}\label{twistなしのexact annihilator}
  For any positive integers $j$ and $n$, we have $M_{n}^{\pm}/p^{j}M_{n}^{\pm}=\mathbb{H}_{n, \fr{p}}^{\pm}[p^{j}]$.
  In other words, the exact annihilator of $\mathbb{H}_{n, \fr{p}}^{\pm}[p^{j}]$ is itself.
\end{prp}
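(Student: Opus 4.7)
The plan is to prove the result in two steps: first establish that $\mathbb{H}_{n,\fr{p}}^{\pm}[p^{j}]$ is self-orthogonal under $\langle\cdot,\cdot\rangle_{n}$, giving the inclusion $\mathbb{H}_{n,\fr{p}}^{\pm}[p^{j}] \subseteq M_{n}^{\pm}/p^{j}M_{n}^{\pm}$; then upgrade this containment to equality by matching orders.

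For the size comparison, $(\mathbb{H}_{n,\fr{p}}^{\pm})^{\vee} \simeq \cal{O}_{\fr{p}}[\Gamma_{n}]$ as $\Z_{p}$-modules (by taking $\Gal(F_{\infty,w}/F_{n,\fr{p}})$-coinvariants on the dual from Proposition \ref{Key prop}, writing $\Gamma_{n} := \Gal(F_{n}/F)$), so $|\mathbb{H}_{n,\fr{p}}^{\pm}[p^{j}]| = p^{2jp^{n}}$. Proposition \ref{p-torはKinfの外} gives $H^{0}(F_{n,\fr{p}}, A[p^{j}]) = 0$, and Weil self-duality $A[p^{j}] \simeq \Hom(A[p^{j}], \mu_{p^{j}})$ together with local Tate duality gives $H^{2}(F_{n,\fr{p}}, A[p^{j}]) = 0$. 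Tate's local Euler characteristic formula over the degree-$2p^{n}$ field $F_{n,\fr{p}}$ then yields $|H^{1}(F_{n,\fr{p}}, A[p^{j}])| = p^{4jp^{n}}$. Consequently the exact annihilator $M_{n}^{\pm}/p^{j}M_{n}^{\pm}$ has order $p^{2jp^{n}}$, matching $|\mathbb{H}_{n,\fr{p}}^{\pm}[p^{j}]|$, so that once one inclusion is established, equality follows.

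For the self-orthogonality, the key inputs are: (i) by Proposition \ref{形式群が消える}, both $\omega_{n}^{\pm}$ and $(\omega_{n}^{\pm})^{\iota}$ annihilate $\hat{E}(\fr{m}_{n})^{\pm}$, hence $\mathbb{H}_{n,\fr{p}}^{\pm}$ (which is built from $\hat{E}(F_{\infty,w})^{\pm}$); (ii) the adjoint identity $\langle fx, y\rangle_{n} = \langle x, f^{\iota}y\rangle_{n}$ for $f \in \Lambda_{n}$, from Galois-equivariance of the Tate pairing; (iii) $\langle x, y\rangle_{n} = -\langle y, x\rangle_{n}$, from the alternating Weil pairing on $E[p^{j}]$ and $p$ odd; and (iv) cyclicity of $\mathbb{H}_{n,\fr{p}}^{\pm}[p^{j}]$ as an $\cal{O}_{\fr{p}}[\Gamma_{n}]/p^{j}$-module, coming from the size computation above. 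By (iv), checking that $\langle\cdot,\cdot\rangle_{n}$ vanishes on $\mathbb{H}_{n,\fr{p}}^{\pm}[p^{j}] \times \mathbb{H}_{n,\fr{p}}^{\pm}[p^{j}]$ reduces to showing $P(h) := \langle \xi, h\xi\rangle_{n} \equiv 0$ for a cyclic generator $\xi$. Using (i) and (ii), $P$ vanishes on the ideal generated by $\omega_{n}^{\pm}$ and $(\omega_{n}^{\pm})^{\iota}$, and by (iii) satisfies the anti-symmetry $P(h) + P(h^{\iota}) = 0$. Since $(\omega_{n}^{\pm})^{\iota}$ is a unit multiple of $\omega_{n}^{\pm}$ in $\Lambda_{n}$ (by reciprocity of cyclotomic polynomials), these constraints force $P \equiv 0$.

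The main obstacle is the self-orthogonality step. The algebraic constraints on $P$ (vanishing on $\omega_{n}^{\pm}$-multiples and anti-symmetry under $\iota$) are explicit, but verifying they force $P \equiv 0$ requires a careful analysis of the ideal structure of $\cal{O}_{\fr{p}}[\Gamma_{n}]/p^{j}$ and its interaction with the involution $\iota$. A potentially cleaner alternative is to establish self-orthogonality of $\mathbb{H}_{w}^{\pm}$ first at the infinite level, where the structure $\mathbb{H}_{w}^{\pm} \simeq (\Lambda_{\cal{O}_{\fr{p}}})^{\vee}$ of Proposition \ref{Key prop} is most transparent, and then descend to finite level by taking Galois invariants, using the compatibility of local Tate pairings along the tower.
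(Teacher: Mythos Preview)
Your order count is fine and matches the paper's, but the self-orthogonality argument has a genuine gap. Your input (i) asserts that $\omega_{n}^{\pm}$ annihilates $\mathbb{H}_{n,\fr{p}}^{\pm}$, inferring this from Proposition~\ref{形式群が消える}. That proposition says $\omega_{n}^{\pm}$ kills $\hat{E}(\fr{m}_{n})^{\pm}$, but $\mathbb{H}_{n,\fr{p}}^{\pm}$ is \emph{not} $\hat{E}(\fr{m}_{n})^{\pm}\otimes F_{\fr{p}}/\cal{O}_{\fr{p}}$: it is the $\Gal(F_{\infty,w}/F_{n,\fr{p}})$-fixed part of the infinite-level object, and its Pontryagin dual is $\Lambda_{n}^{2}$ (as you yourself note). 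On $\Lambda_{n}$ the element $\omega_{n}^{\pm}$ does not act as zero; only $\omega_{n}^{+}\omega_{n}^{-}=\gamma^{p^{n}}-1$ does. Concretely, the paper shows $\hat{E}(\fr{m}_{n})^{-}\otimes F_{\fr{p}}/\cal{O}_{\fr{p}}=\mathbb{H}_{n,\fr{p}}^{-}[\omega_{n}^{-}]$ is a \emph{proper} summand of $\mathbb{H}_{n,\fr{p}}^{-}$, of $\Z_{p}$-corank $2\deg\omega_{n}^{-}<2p^{n}$. With (i) removed, your function $P(h)=\langle\xi,h\xi\rangle_{n}$ is only constrained by additivity in $h$ and the anti-symmetry $P(h)+P(h^{\iota})=0$; these conditions leave a large space of nonzero $P$ (e.g.\ any odd function of $\gamma^{i}$ under $i\mapsto -i$), so the argument does not close.

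What the paper supplies in place of (i) is a geometric input you never invoke: the image of the Kummer map $\hat{E}(\fr{m}_{n})\hookrightarrow H^{1}(F_{n,\fr{p}},T)$ is its own orthogonal under the Tate pairing, so in particular $\langle\hat{E}(\fr{m}_{n})^{-}\otimes F_{\fr{p}}/\cal{O}_{\fr{p}},\,\hat{E}(\fr{m}_{n})^{-}\rangle_{n}=0$. From there the paper decomposes $\mathbb{H}_{n,\fr{p}}^{-}=\mathbb{H}_{n,\fr{p}}^{-}[\omega_{n}^{-}]+\mathbb{H}_{n,\fr{p}}^{-}[\omega_{n}^{+}]$, handles the second piece via $\mathbb{H}_{n,\fr{p}}^{-}[\omega_{n}^{+}]=\omega_{n}^{-}\mathbb{H}_{n,\fr{p}}^{-}$ together with $(\omega_{n}^{-})^{\iota}\hat{E}(\fr{m}_{n})^{-}=0$, and only then passes to $p^{j}$-torsion by writing $\mathbb{H}_{n,\fr{p}}^{-}[p^{j}]$ as a union over $m\ge n$ of $(\hat{E}(\fr{m}_{m})^{-}/p^{j})^{\Gal(F_{m,\fr{p}}/F_{n,\fr{p}})}$ and using the corestriction surjectivity of Lemma~\ref{lem:cor}. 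Your alternative suggestion of proving orthogonality at the infinite level and descending is reasonable in spirit, but to carry it out you would still need some form of the Kummer isotropy as the seed; the module-theoretic description $(\mathbb{H}_{w}^{\pm})^{\vee}\simeq\Lambda_{\cal{O}}$ alone does not encode how $\mathbb{H}_{w}^{\pm}$ sits inside $H^{1}(F_{\infty,w},A)$ relative to the pairing.
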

The proof of Proposition \ref{twistなしのexact annihilator} is the same way as Proposition 3.15 in \cite{Kim07}.
We prove only the case of $-$ because both the cases of $+$ and $-$ are the same.
In order to prove this proposition, we ready some symbols and a lemma.
For $n \leq m$, let $\Res_{n}^{m}:H^{1}(F_{n, \fr{p}}, A_{s}) \rarrow H^{1}(F_{m, \fr{p}}, A_{s})$ be the restriction map and $\Cor_{n}^{m}:H^{1}(F_{m, \fr{p}}, A_{s}) \rarrow H^{1}(F_{n, \fr{p}}, A_{s})$ the corestriction map.

\begin{lem}\label{lem:cor}
  We have $\Cor_{n}^{m}(\mathbb{H}_{m, \fr{p}}^{-}[p^{j}])=\mathbb{H}_{n, \fr{p}}^{-}[p^{j}]$.
\end{lem}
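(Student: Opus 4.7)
The plan is to pass to Pontryagin duals, where the corestriction map becomes multiplication by an explicit norm element of the Iwasawa algebra; the surjectivity we need then translates into an injectivity statement that is a direct computation in the monomial basis.

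Set $H_{k} := \Gal(F_{\infty, w}/F_{k, \fr{p}})$ and fix a topological generator $\gamma$ of $\Gamma$. The starting point is Proposition~\ref{Key prop}, which gives $(\mathbb{H}_{w}^{-})^{\vee} \simeq \Lambda_{\cal{O}_{\fr{p}}}$ as $\Lambda_{\cal{O}_{\fr{p}}}$-modules. Pontryagin duality swaps invariants and coinvariants, so this upgrades to $\mathbb{H}_{k, \fr{p}}^{-} \simeq \Lambda_{\cal{O}_{\fr{p}}, k}^{\vee}$ for every $k$, and hence $\mathbb{H}_{k, \fr{p}}^{-}[p^{j}] \simeq (\Lambda_{\cal{O}_{\fr{p}}, k}/p^{j})^{\vee}$. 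Under the identification \eqref{同一視1}, the corestriction $\Cor_{n}^{m}$ is realized as the norm $N_{H_{n}/H_{m}} = \sum_{\sigma \in H_{n}/H_{m}} \sigma$ acting on $\mathbb{H}_{w}^{-}$; this norm manifestly sends $(\mathbb{H}_{w}^{-})^{H_{m}}$ into $(\mathbb{H}_{w}^{-})^{H_{n}}$ and preserves $[p^{j}]$-torsion, giving the easy containment $\Cor_{n}^{m}(\mathbb{H}_{m, \fr{p}}^{-}[p^{j}]) \subseteq \mathbb{H}_{n, \fr{p}}^{-}[p^{j}]$.

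For the opposite containment I would dualize. Since $H_{n}/H_{m}$ is cyclic, generated by the image of $\gamma^{p^{n}}$, the adjunction formula $\langle N_{H_{n}/H_{m}}(\phi), x \rangle = \langle \phi, \nu_{m, n} \, x \rangle$ identifies the Pontryagin dual of $\Cor_{n}^{m}|_{[p^{j}]}$ with the multiplication-by-$\nu_{m, n}$ map
\begin{align}
  \Lambda_{\cal{O}_{\fr{p}}, n}/p^{j} \longrightarrow \Lambda_{\cal{O}_{\fr{p}}, m}/p^{j}, \qquad x \longmapsto \nu_{m, n} \, x,
\end{align}
where $\nu_{m, n} := \sum_{i=0}^{p^{m-n}-1} \gamma^{i p^{n}}$. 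The surjectivity we want is therefore equivalent to the injectivity of this map. I would verify the latter by expanding $f = \sum_{r=0}^{p^{n}-1} a_{r} \gamma^{r}$ in the monomial basis and observing that the exponents $r + q p^{n}$ for $0 \leq r < p^{n}$, $0 \leq q < p^{m-n}$ enumerate $\{0, 1, \dots, p^{m} - 1\}$ bijectively, so $\nu_{m, n} f = 0$ in $\Lambda_{\cal{O}_{\fr{p}}, m}/p^{j}$ forces each $a_{r}$ to vanish modulo $p^{j}$.

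The main bookkeeping obstacle is the dualization itself: one must carefully check that under the identification $\mathbb{H}_{w}^{-} \simeq \Lambda_{\cal{O}_{\fr{p}}}^{\vee}$ the norm $N_{H_{n}/H_{m}}$ really corresponds to multiplication by $\nu_{m, n}$, and that this correspondence survives reduction modulo $p^{j}$ (equivalently, that the lift $\nu_{m, n} \tilde f$ of $f \in \Lambda_{\cal{O}_{\fr{p}}, n}/p^{j}$ is well defined in $\Lambda_{\cal{O}_{\fr{p}}, m}/p^{j}$, which uses $\nu_{m, n}\,\omega_{n} = \omega_{m}$ in $\Lambda$). An alternative route uses the explicit generator $c_{n}^{-}$ from Theorem~\ref{プラマイ分解} together with its parity-dependent trace relations, more in the spirit of \cite{Kim07}; however, the duality approach sketched here has the advantage of working uniformly in $m - n$ without any parity gymnastics.
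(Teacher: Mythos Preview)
Your proof is correct and follows essentially the same route as the paper: both dualize via Proposition~\ref{Key prop}, identify $\Cor_{n}^{m}$ with multiplication by the norm element $\nu_{m,n}=\prod_{n+1\leq n'\leq m}\varphi_{p^{n'}}(\gamma)$ on the dual side, and then verify the required injectivity. The only cosmetic differences are that the paper works with $\Lambda^{2}$ over $\Z_{p}$ (rather than $\Lambda_{\cal{O}_{\fr{p}}}$) and concludes injectivity by observing that $\Coker h\simeq\bigl(\Z_{p}[X]/\prod_{n+1\leq n'\leq m}\varphi_{p^{n'}}(X+1)\bigr)^{2}$ is $\Z_{p}$-free, whereas you compute directly in the monomial basis; these are equivalent endgames for the same argument.
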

\begin{proof}
  By the isomorphism $(\mathbb{H}_{m, \fr{p}}^{-})^{\vee} \simeq \Lambda_{m}^{2}$ as $\Z_{p}$-module, 
  $\mathbb{H}_{m, \fr{p}}^{-}[p^{j}]$ can be identified with $\Hom_{\Z_{p}}(\Lambda_{m}^{2}, \Z/p^{j}\Z)$.
  The restriction of $\Res_{n}^{m}$ to $\mathbb{H}_{n, \fr{p}}^{-}[p^{j}]$ can be also identified with the injection 
  \begin{align}
    \pr^{*}:\Hom(\Lambda_{n}^{2}, \Z/p^{j}\Z) \rarrow \Hom(\Lambda_{m}^{2}, \Z/p^{j}\Z)
  \end{align}
  from the canonical projection $\pr:\Lambda_{m}^{2} \rarrow \Lambda_{n}^{2}$.
  Since we have $\Res_{n}^{m} \circ \Cor_{n}^{m}=\Tr_{m/n}$, the corestriction map $\Cor_{n}^{m}$ can be seen as the following map $h^{*}$:
  For $x \in \Lambda_{n}^{2}$, take $x' \in \pr^{-1}(x)$ and we define a map $h:\Lambda_{n}^{2} \rarrow \Lambda_{m}^{2}$ as 
  \begin{align}
    h(x):=\prod_{n+1 \leq n' \leq m} \varphi_{p^{n'}}(\gamma)x'.
  \end{align}
  Since we have $x' \in (\gamma^{p^{n}}-1)\Lambda^{2}$ if $\pr(x')=0$, there exists $y' \in \Lambda^{2}$ such that $x'=(\gamma^{p^{n}}-1)y'$.
  Therefore, we see that
  \begin{align}
    \prod_{n+1 \leq n' \leq m} \varphi_{p^{n'}}(\gamma)x'=\prod_{1 \leq n' \leq m} \varphi_{p^{n'}}(\gamma)y'=(\gamma^{p^{m}}-1)y'=0
  \end{align}
  in $\Lambda_{m}^{2}$.
  This follows that the definition of $h$ is independent of the choice of $x'$.
  By $n \leq m$, $h$ is injective.
  Consider the map $h^{*}:\Hom(\Lambda_{m}^{2}, \Z/p^{j}\Z) \rarrow \Hom(\Lambda_{n}^{2}, \Z/p^{j}\Z)$ induced by $h$.
  Since we have $\pr^{*} \circ h^{*}=\Tr_{m/n}$ and $\pr^{*}=\Res^{m}_{n}$, we see that $h^{*}=\Cor_{n}^{m}$.
  Identifying these maps, it is sufficient to prove that $h^{*}$ is surjective.
  Since $\Coker h \simeq \left( \Z_{p}[X]/\left( \prod_{n+1 \leq n' \leq m} \varphi_{n'}(X+1) \right) \right)^{2}$ is free $\Z_{p}$-module, we see that
  \begin{align}
    0 \rarrow \Lambda_{n}^{2} \overset{h}{\rarrow} \Lambda_{m}^{2} \rarrow \Coker h \rarrow 0
  \end{align}
  is exact.
  Therefore, $h^{*}$ is surjective.
\end{proof}

\begin{rem}\label{rem:cor}
  We can similarly prove $\Cor_{n}^{m}(\mathbb{H}_{m, \fr{p}}^{s, -}[p^{j}])=\mathbb{H}_{n, \fr{p}}^{s, -}[p^{j}]$ for any $s \in \Z_{p}$ and $n \leq m$.
\end{rem}

\begin{proof}[Proof of Proposition \ref{twistなしのexact annihilator}]
  First, we prove $\mathbb{H}_{n, \fr{p}}^{-}=\mathbb{H}_{n, \fr{p}}^{-}[\omega_{n}^{-}]+\mathbb{H}_{n, \fr{p}}^{-}[\omega_{n}^{+}]$.
  Since we have $\mathbb{H}^{-}_{n, \fr{p}} \simeq (\Lambda_{n}^{2})^{\vee}$, we have the isomorphisms 
  \begin{align}
    \mathbb{H}_{n, \fr{p}}^{-}[\omega_{n}^{-}] \simeq \left((\Lambda_{n}/(\omega_{n}^{-})^{\iota})^{2}\right)^{\vee}, \quad
    \mathbb{H}_{n, \fr{p}}^{-}[\omega_{n}^{+}] \simeq \left((\Lambda_{n}/(\omega_{n}^{+})^{\iota})^{2}\right)^{\vee}. \label{プラマイのdual}
  \end{align}

  Therefore, we see that $\corank_{\Z_{p}} \mathbb{H}_{n, \fr{p}}^{-}[\omega_{n}^{-}]=2\deg(\omega_{n}^{-})$, $\corank_{\Z_{p}} \mathbb{H}_{n, \fr{p}}^{-}[\omega_{n}^{+}]=2\deg(\omega_{n}^{+})$.
  Here, it turns out 
  \begin{align}
    \mathbb{H}_{n, \fr{p}}^{-}[\omega_{n}^{-}] \cap \mathbb{H}_{n, \fr{p}}^{-}[\omega_{n}^{+}] \simeq \left( (\Lambda_{n}/((\omega_{n}^{-})^{\iota}+(\omega_{n}^{+})^{\iota}))^{2} \right)^{\vee}. \label{order}
  \end{align}
  Since $\omega_{n}^{-}$ is prime to $\omega_{n}^{+}$, the right hand sides of \eqref{order} is finite.
  Both $\mathbb{H}_{n, \fr{p}}^{-}[\omega_{n}^{-}]$ and $\mathbb{H}_{n, \fr{p}}^{-}[\omega_{n}^{+}]$ are $p$-divisible, we obtain $\mathbb{H}_{n, \fr{p}}^{-}=\mathbb{H}_{n, \fr{p}}^{-}[\omega_{n}^{-}]+\mathbb{H}_{n, \fr{p}}^{-}[\omega_{n}^{+}]$.

  Next, we prove $\hat{E}(\mathfrak{m}_{n})^{-} \subset M_{n}^{-}$ for the maximal ideal $\fr{m}_{n}$ of $F_{n, \fr{p}}$.
  Since we have $\omega_{n}^{-}\hat{E}(\mathfrak{m}_{n})^{-}=0$ by Proposition \ref{形式群が消える}, we see that $\hat{E}(\mathfrak{m}_{n})^{-} \otimes F_{\fr{p}}/\mathcal{O}_{\fr{p}} \subset \mathbb{H}_{n, \fr{p}}^{-}[\omega_{n}^{-}]$.
  By $(\hat{E}(\mathfrak{m}_{n})^{-} \otimes_{\mathcal{O}_{\fr{p}}} F_{\fr{p}}/\mathcal{O}_{\fr{p}})^{\vee} \simeq \Lambda_{n}^{2}$ as $\Lambda_{n}$-module from Proposition \ref{プラマイ分解}(2), we see that 
  \begin{align}
    \corank_{\Z_{p}} (\hat{E}(\mathfrak{m}_{n})^{-} \otimes_{\mathcal{O}_{\fr{p}}} F_{\fr{p}}/\mathcal{O}_{\fr{p}})
    % =\corank_{\mathcal{O}_{\fr{p}}} \mathcal{O}_{\fr{p}}[X]/(\omega_{n}^{-})
    =2\deg(\omega_{n}^{-})
    =\corank_{\Z_{p}} \mathbb{H}_{n, \fr{p}}^{-}[\omega_{n}^{-}]
  \end{align}
  Since both $\hat{E}(\mathfrak{m}_{n})^{-} \otimes_{\mathcal{O}_{\fr{p}}} F_{\fr{p}}/\mathcal{O}_{\fr{p}}$ and $\mathbb{H}_{n, \fr{p}}^{-}[\omega_{n}^{-}]$ are $p$-divisible, we see that
  \begin{align}
    \hat{E}(\mathfrak{m}_{n})^{-} \otimes_{\mathcal{O}_{\fr{p}}} F_{\fr{p}}/\mathcal{O}_{\fr{p}}=\mathbb{H}_{n, \fr{p}}^{-}[\omega_{n}^{-}]. \label{n-layer}
  \end{align}  
  Therefore, we obtain 
  \begin{align}
    \langle \mathbb{H}_{n, \fr{p}}^{-}[\omega_{n}^{-}], \hat{E}^{-}(\mathfrak{m}_{n}) \rangle_{n}
    =\langle \hat{E}(\mathfrak{m}_{n})^{-} \otimes_{\mathcal{O}_{\fr{p}}} F_{\fr{p}}/\mathcal{O}_{\fr{p}}, \hat{E}(\mathfrak{m}_{n})^{-} \rangle_{n}
    =0.
  \end{align}
  Since we have $\omega_{n}^{-}\mathbb{H}_{n, \fr{p}}^{-} \subset \mathbb{H}_{n, \fr{p}}^{-}[\omega_{n}^{+}]$ and $\Ker(\mathbb{H}_{n, \fr{p}}^{-} \overset{\omega_{n}^{-}}{\rarrow} \mathbb{H}_{n, \fr{p}}^{-})
  =\mathbb{H}_{n,\fr{p}}^{-}[\omega_{n}^{-}]$, we see that $\omega_{n}^{-}\mathbb{H}_{n, \fr{p}}^{-}
  \simeq \mathbb{H}_{n, \fr{p}}^{-}/\mathbb{H}_{n, \fr{p}}^{-}[\omega_{n}^{-}]$.
  By \eqref{プラマイのdual}, we calculate
  \begin{align}
    \corank_{\Z_{p}} \omega_{n}^{-}\mathbb{H}_{n, \fr{p}}^{-}
    =2(p^{n}-\deg(\omega_{n}^{-}))
    =\corank_{\Z_{p}} \mathbb{H}_{n, \fr{p}}^{-}[\omega_{n}^{+}].
  \end{align}
  Since both $\omega_{n}^{-}\mathbb{H}_{n, \fr{p}}^{-}$ and $\mathbb{H}_{n, \fr{p}}^{-}[\omega_{n}^{+}]$ are $p$-divisible, we see that $\omega_{n}^{-}\mathbb{H}_{n, \fr{p}}^{-}=\mathbb{H}_{n, \fr{p}}^{-}[\omega_{n}^{+}]$.
  By $(\omega_{n}^{-})^{\iota}\hat{E}(\mathfrak{m}_{n})^{-}=0$, we see that 
  \begin{align}
    \langle \mathbb{H}_{n, \fr{p}}^{-}[\omega_{n}^{+}], \hat{E}(\mathfrak{m}_{n})^{-} \rangle_{n}
    &= \langle\omega_{n}^{-}\mathbb{H}_{n, \fr{p}}^{-}, \hat{E}(\mathfrak{m}_{n})^{-} \rangle_{n} \\
    &=\langle \mathbb{H}_{n, \fr{p}}^{-}, (\omega_{n}^{-})^{\iota}\hat{E}(\mathfrak{m}_{n})^{-} \rangle_{n} \\
    &=\langle \mathbb{H}_{n, \fr{p}}^{-}, 0 \rangle_{n} \\
    &=0.
  \end{align}
  These show that $\hat{E}(\mathfrak{m}_{n})^{-}$ is the annihilator of $\mathbb{H}_{n, \fr{p}}^{-}=\mathbb{H}_{n, \fr{p}}^{-}[\omega_{n}^{-}]+\mathbb{H}_{n, \fr{p}}^{-}[\omega_{n}^{+}]$, that is $\hat{E}(\mathfrak{m}_{n})^{-} \subset M_{n}^{-}$.
  In particular, we obtain $\hat{E}(\mathfrak{m}_{n})^{-}/p^{j}\hat{E}(\mathfrak{m}_{n})^{-} \subset M_{n}^{-}/p^{j}M_{n}^{-}$.

  Next, we prove that $\mathbb{H}_{n, \fr{p}}^{-}[p^{j}] \subset M_{n}^{-}/p^{j}M_{n}^{-}$ for any $j$.
  To show this inclusion, it is sufficient to show that 
  \begin{align}
    (\hat{E}(\mathfrak{m}_{m})^{-}/p^{j}\hat{E}^{-}(\mathfrak{m}_{m}))^{\Gal(F_{m, \fr{p}}/F_{n, \fr{p}})}
    \subset M_{n}^{-}/p^{j}M_{n}^{-}
  \end{align}
  for any $m \geq n$ because we have $\mathbb{H}_{n, \fr{p}}^{-}[p^{j}]
  =\bigcup_{m=n}^{\infty} \left( \hat{E}(\mathfrak{m}_{m})^{-}/p^{j}\hat{E}(\mathfrak{m}_{m})^{-} \right)^{\Gal(F_{\infty, w}/F_{n, \fr{p}})}$.
  Take any $y \in \left( \hat{E}(\mathfrak{m}_{m})^{-}/p^{j}\hat{E}(\mathfrak{m}_{m})^{-} \right)^{\Gal(F_{m, \fr{p}}/F_{n, \fr{p}})}$.
  By $\hat{E}(\mathfrak{m}_{m})^{-}/p^{j}\hat{E}(\mathfrak{m}_{m})^{-} \subset M_{m}^{-}/p^{j}M_{m}^{-}$, we have $\langle \mathbb{H}_{m, \fr{p}}^{-}[p^{j}], y \rangle_{m}=0$.
  Since the map 
  \begin{align}
    H^{1}(F_{n, \fr{p}}, A[p^{j}]) \rarrow H^{1}(F_{m, \fr{p}}, A[p^{j}])^{\Gal(F_{m, \fr{p}}/F_{n, \fr{p}})}
  \end{align}
  is isomorphism by Proposition \ref{p-torはKinfの外}, the element $y$ lies in $H^{1}(F_{n, p}, A[p^{j}])$.
  By Lemma \ref{lem:cor}, for $x \in \mathbb{H}_{n, \fr{p}}^{-}[p^{j}]$, there exist $x' \in \mathbb{H}_{m, \fr{p}}^{-}[p^{j}]$ such that $x = \Cor_{n}^{m}(x')$.
  Thus, we see that 
  \begin{align}
    \langle x,y \rangle_{n} =\langle \Cor_{n}^{m}x', y \rangle_{n}=\langle x', \Res_{n}^{m}y \rangle_{m}=0,
  \end{align}
  that is, $y \in M_{n}^{-}/p^{j}M_{n}^{-}$.

  Finally, it suffices to show that the order of $M_{n}^{-}/p^{j}M_{n}^{-}$ is same to $\mathbb{H}_{n, \fr{p}}^{-}[p^{j}]$.
  Since $M_{n}^{-}/p^{j}M_{n}^{-}$ is the exact annihilator of $\mathbb{H}_{n, \fr{p}}^{-}[p^{j}]$, the pairing
  \begin{align}
    \langle ~, ~ \rangle_{n}: \mathbb{H}_{n, \fr{p}}^{-}[p^{j}] \times \frac{H^{1}(F_{n, \fr{p}}, A[p^{j}])}{M_{n}^{-}/p^{j}M_{n}^{-}} \rarrow \Z/p^{j}\Z
  \end{align}
  is a perfect pairing.
  Thus, we see that $\#\mathbb{H}_{n, \fr{p}}^{-}[p^{j}]=\#\left(H^{1}(F_{n, \fr{p}}, A[p^{j}]) \middle/(M_{n}^{-}/p^{j}M_{n}^{-}) \right)$.
  The isomorphisms \eqref{プラマイのdual} imply $\mathbb{H}_{n,\fr{p}}^{-}[p^{j}] \simeq (\Z/p^{j}\Z)^{2p^{n}}$.
  Therefore, we see $\#\mathbb{H}_{n, \fr{p}}^{-}[p^{j}]=p^{2jp^{n}}$.
  On the other hand, since we have $\# H^{1}(F_{n, \fr{p}}, A[p^{j}])=p^{4jp^{n}}$ by Tate's Euler characteristic formula, we see that $\#(M_{n}^{-}/p^{j}M_{n}^{-})=p^{2jp^{n}}$.
  This concludes the proof of the proposition.
\end{proof}

The analogous result to twist for Proposition \ref{twistなしのexact annihilator} also holds.

\begin{prp}\label{twistありのexact annihilator}
  For any positive integers $j$ and $n$, $\mathbb{H}_{n, \fr{p}}^{s, \pm}[p^{j}]$ is the exact annihilator of $\bb{H}_{n, \fr{p}}^{-s, \pm}[p^{j}]$ with respect to the Tate pairing
  \begin{align}
    H^{1}(F_{n, \fr{p}}, A_{s}[p^{j}]) \times H^{1}(F_{n, \fr{p}}, A_{-s}[p^{j}]) \rarrow \Z/p^{j}\Z.
  \end{align}
\end{prp}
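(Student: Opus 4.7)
The plan is to adapt the proof of Proposition~\ref{twistなしのexact annihilator} to the twisted setting, exploiting the fact that twisting by $\kappa^{s}$ alters only the $\Gamma$-action and preserves the underlying $\Z_{p}$-module structure. First, Proposition~\ref{Key prop} still yields $(\mathbb{H}_{w}^{s, \pm})^{\vee} \simeq \Lambda$ as $\Z_{p}$-modules, and hence $(\mathbb{H}_{n, \fr{p}}^{s, \pm})^{\vee} \simeq \Lambda_{n}^{2}$ as $\Z_{p}$-modules. I would then define $M_{n}^{-s, \pm} \subset H^{1}(F_{n, \fr{p}}, T_{-s})$ to be the Tate annihilator of $\mathbb{H}_{n, \fr{p}}^{s, \pm}$; since $\mathbb{H}_{n, \fr{p}}^{s, \pm}$ is $p$-divisible, $M_{n}^{-s, \pm}/p^{j}M_{n}^{-s, \pm}$ equals the exact annihilator of $\mathbb{H}_{n, \fr{p}}^{s, \pm}[p^{j}]$ under the pairing in the statement, so the goal reduces to proving $M_{n}^{-s, \pm}/p^{j}M_{n}^{-s, \pm} = \mathbb{H}_{n, \fr{p}}^{-s, \pm}[p^{j}]$.

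Next I would replay the five-step structure of the untwisted proof in the twisted setting, treating only the ``$-$'' case by symmetry: decompose $\mathbb{H}_{n, \fr{p}}^{s, -}$ as the sum of its $\omega_{n}^{-}$- and $\omega_{n}^{+}$-torsion parts (with $\omega_{n}^{\pm}$ acting through the twisted $\Gamma$-action); use Proposition~\ref{形式群が消える} and Theorem~\ref{プラマイ分解} to identify the twisted image of $\hat{E}(\fr{m}_{n})^{-}$ with the corresponding $\omega_{n}^{-}$-torsion component inside $M_{n}^{-s, -}$ via a $\Z_{p}$-corank match and $p$-divisibility; and use the $\iota$-involution exactly as in the untwisted case to place the remaining $\omega_{n}^{+}$-torsion piece inside $M_{n}^{-s, -}$, giving the inclusion $\mathbb{H}_{n, \fr{p}}^{-s, -}[p^{j}] \subset M_{n}^{-s, -}/p^{j}M_{n}^{-s, -}$. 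The Galois-invariance bootstrap uses Remark~\ref{rem:cor} in place of Lemma~\ref{lem:cor}, together with the inflation--restriction identification of $H^{1}(F_{n, \fr{p}}, A_{s}[p^{j}])$, both of which remain valid since $A_{s}^{G_{F_{\infty, w}}}=0$.

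The final step is a count of orders via Tate's local Euler characteristic formula, which gives $\#H^{1}(F_{n, \fr{p}}, A_{s}[p^{j}])=p^{4jp^{n}}$ independently of $s$; combined with $\#\mathbb{H}_{n, \fr{p}}^{-s, -}[p^{j}] = p^{2jp^{n}}$ from the dual rank computation, this forces $\#(M_{n}^{-s, -}/p^{j}M_{n}^{-s, -}) = p^{2jp^{n}}$, so the inclusion is an equality. The principal obstacle I anticipate is bookkeeping the twist through the $\omega_{n}^{\pm}$-filtration and through Rubin's decomposition---specifically, verifying that the corank statements, the $p$-divisibility, and the $\iota$-compatibilities of the Tate pairing transfer cleanly once we replace the self-pairing by the pairing between the $s$- and $-s$-sides. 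Since $\kappa$ takes values in $1+p\Z_{p}$ and the twist is a $\Lambda$-module isomorphism up to a change of parameter, these structural transfers should be essentially formal, but consistency of the $s$/$-s$ pair must be checked at each step.
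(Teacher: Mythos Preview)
Your approach differs from the paper's, which takes a much shorter route. Instead of replaying all five steps of Proposition~\ref{twistなしのexact annihilator} in the twisted setting, the paper reduces directly to the untwisted result: given $j$ and $n$, choose $N>n$ large enough that $\kappa(\Gal(F_{\infty}/F_{N})) \equiv 1 \pmod{p^{j}}$. Then $A_{\pm s}[p^{j}]=A[p^{j}]$ as $G_{F_{N}}$-modules, and hence $\mathbb{H}_{N,\fr{p}}^{\pm s,\pm}[p^{j}]=\mathbb{H}_{N,\fr{p}}^{\pm}[p^{j}]$ as subgroups of $H^{1}(F_{N,\fr{p}},A[p^{j}])$, so Proposition~\ref{twistなしのexact annihilator} gives the exact-annihilator statement at level $N$ for free. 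The descent from $N$ to $n$ then uses Remark~\ref{rem:cor} (surjectivity of $\Cor_{n}^{N}$ on $\mathbb{H}^{-s,\pm}[p^{j}]$) together with the $\Res$--$\Cor$ adjunction of the Tate pairing, and the order comparison is exactly your final paragraph.

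Your direct approach may be salvageable, but the obstacle you flag is more than ``essentially formal''. The heart of Step~2 in the untwisted proof is the Kummer isotropy $\langle \hat{E}(\fr{m}_{n})^{-}\otimes F_{\fr{p}}/\cal{O}_{\fr{p}},\,\hat{E}(\fr{m}_{n})^{-}\rangle_{n}=0$, and there is no Kummer map into $H^{1}(F_{n,\fr{p}},T_{-s})$ for $s\neq 0$; the phrase ``twisted image of $\hat{E}(\fr{m}_{n})^{-}$'' has no evident meaning as a subgroup of $H^{1}(F_{n,\fr{p}},T_{-s})$ without first passing to a level where the twist trivializes modulo $p^{j}$---which is precisely the paper's shortcut. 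So the paper's reduction both avoids the $\omega_{n}^{\pm}$-bookkeeping entirely and is logically prior to the step you would need to fill in.
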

\begin{proof}
  The way of proof is the same as \cite[Proposition 3.4]{Kim13}.
  For any positive integers $j$ and $n$, take enough large $N (>n)$ satisfying $\kappa(\Gal(F_{\infty}/F_{N})) \equiv 1 \mod p^{j}$.
  Then, $A_{s}[p^{j}]$ and $A_{-s}[p^{j}]$ are equal to $A[p^{j}]$ as $G_{F_{N}}$-module.
  By Proposition \ref{twistなしのexact annihilator}, the exact annihilator of $\mathbb{H}_{N, \fr{p}}^{\pm}[p^{j}]$ with respect to the Tate pairing 
  \begin{align}
    \langle ~ , ~ \rangle_{N}:H^{1}(F_{N, \fr{p}}, A[p^{j}]) \times H^{1}(F_{N, \fr{p}}, A[p^{j}]) \rarrow \Z/p^{j}\Z
  \end{align}
  is itself.
  Thus, $\mathbb{H}_{N, \fr{p}}^{s, \pm}[p^{j}]$ is the exact annihilator of $\mathbb{H}_{N, \fr{p}}^{-s, \pm}[p^{j}]$ for enough large $N$.
  
  Since we have $(\mathbb{H}_{m, \fr{p}}^{s,\pm})^{\vee} \simeq \Lambda_{m}^{2}$, we see that $\Cor_{n}^{N}(\mathbb{H}_{N, \fr{p}}^{-s, \pm}[p^{j}])=\mathbb{H}_{n, \fr{p}}^{-s,\pm}[p^{j}]$ by Lemma \ref{lem:cor}.
  Therefore, for any $y \in \mathbb{H}_{n, \fr{p}}^{-s,\pm}[p^{j}]$, there exists $z \in \mathbb{H}_{N,\fr{p}}^{-s, \pm}[p^{j}]$ such that $\Cor_{n}^{N}(z)=y$.
  Then, for any $x \in \mathbb{H}_{n, \fr{p}}^{s,\pm}[p^{j}]$, we see that
  \begin{align}
    \langle x,y \rangle_{n}=\langle x, \Cor_{n}^{N}(z) \rangle_{n}=\langle \Res_{n}^{N}(x), z \rangle_{N}=0.
  \end{align}
  By Tate's Euler characteristic formula, we can calculate $\# H^{1}(F_{n, \fr{p}}, A_{-s}[p^{j}])=p^{4jp^{n}}$.
  As the proof of Proposition \ref{twistなしのexact annihilator}, $\mathbb{H}_{n,\fr{p}}^{s,\pm}[p^{j}]$ is the exact annihilator of $\mathbb{H}_{n, \fr{p}}^{-s, \pm}[p^{j}]$ by comparing the orders.
\end{proof}

\section{Control theorem}
\noindent
We continue to use the same notations as in the previous sections and fix $\fr{p}$ and $w$.
By Proposition \ref{p-torはKinfの外}, the map $f_{n}^{\pm}:\mathbb{H}_{n,\fr{p}}^{\pm}[\omega_{n}^{\pm}] \rarrow \mathbb{H}_{w}^{\pm}[\omega_{n}^{\pm}]$ is injective.
For this map $f_{n}^{\pm}$, we also obtain the following:

\begin{lem}\label{コントロール定理の補題}
  For any positive integers $n$, the order of $\Coker f_{n}^{\pm}$ is finite.
\end{lem}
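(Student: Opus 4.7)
The plan is to establish the stronger statement that $f_{n}^{\pm}$ is in fact an isomorphism, so that $\Coker f_{n}^{\pm}=0$ is trivially finite. The key input is the algebraic identity $\omega_{n}^{+}\omega_{n}^{-} = \gamma^{p^{n}}-1$ in $\Lambda$ (which is immediate from $X^{p^{n}}-1 = (X-1)\prod_{k=1}^{n}\varphi_{p^{k}}(X)$), together with the defining property $\mathbb{H}_{n,\fr{p}}^{\pm} = (\mathbb{H}_{w}^{\pm})^{\Gal(F_{\infty,w}/F_{n,\fr{p}})}$.

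First I would unwind the construction: via the identification \eqref{同一視1}, the map $f_{n}^{\pm}$ is nothing but the tautological inclusion $(\mathbb{H}_{w}^{\pm})^{\Gal(F_{\infty,w}/F_{n,\fr{p}})}[\omega_{n}^{\pm}] \hookrightarrow \mathbb{H}_{w}^{\pm}[\omega_{n}^{\pm}]$. To prove surjectivity, I take any $x \in \mathbb{H}_{w}^{\pm}[\omega_{n}^{\pm}]$ and compute
\begin{align}
 (\gamma^{p^{n}}-1)\,x = \omega_{n}^{\mp}(\omega_{n}^{\pm}x) = 0.
\end{align}
Hence $x$ is fixed by the topological generator $\gamma^{p^{n}}$ of $\Gal(F_{\infty,w}/F_{n,\fr{p}})$, and so belongs to $(\mathbb{H}_{w}^{\pm})^{\Gal(F_{\infty,w}/F_{n,\fr{p}})} = \mathbb{H}_{n,\fr{p}}^{\pm}$; together with $\omega_{n}^{\pm}x=0$ this puts $x$ in $\mathbb{H}_{n,\fr{p}}^{\pm}[\omega_{n}^{\pm}]$, as required.

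For a more structural variant of the same argument I would dualize instead. By Proposition \ref{Key prop} one has $(\mathbb{H}_{w}^{\pm})^{\vee}\simeq \Lambda_{\cal{O}_{\fr{p}}}\simeq\Lambda^{2}$, and hence $(\mathbb{H}_{n,\fr{p}}^{\pm})^{\vee}\simeq \Lambda_{n}^{2}$. Pontryagin dualizing $f_{n}^{\pm}$ on the $\omega_{n}^{\pm}$-torsion parts yields the natural surjection $\Lambda^{2}/\omega_{n}^{\pm}\Lambda^{2} \twoheadrightarrow \Lambda^{2}/(\omega_{n}^{\pm},\gamma^{p^{n}}-1)\Lambda^{2}$, and the inclusion $\gamma^{p^{n}}-1 = \omega_{n}^{+}\omega_{n}^{-} \in (\omega_{n}^{\pm})$ collapses the right-hand side back to $\Lambda^{2}/\omega_{n}^{\pm}\Lambda^{2}$, making this map an isomorphism. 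One should note in passing that $(\omega_{n}^{\pm})$ and $((\omega_{n}^{\pm})^{\iota})$ coincide as ideals of $\Lambda$, since $\varphi_{p^{k}}(\gamma)^{\iota}$ differs from $\varphi_{p^{k}}(\gamma)$ by the unit $\gamma^{-\deg \varphi_{p^{k}}}\in\Lambda^{\times}$, so the $\iota$-ambiguity under Pontryagin duality is harmless.

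I do not foresee any serious obstacle here: the whole content is the ideal-theoretic fact $(\gamma^{p^{n}}-1)\in(\omega_{n}^{\pm})$, and the only care required is to correctly identify $f_{n}^{\pm}$ via \eqref{同一視1} and to keep track of the $\Lambda$-action under Pontryagin duality in the second approach.
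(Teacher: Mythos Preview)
Your argument is correct and in fact establishes the stronger statement $\Coker f_{n}^{\pm}=0$, whereas the paper only proves finiteness. The paper's proof (following \cite[Lemma~5.1]{Agb-How05}) computes the Pontryagin duals of both sides---each isomorphic to $(\Lambda/\omega_{n}^{\pm})^{2}$---and concludes finiteness of the cokernel from equality of $\Z_{p}$-coranks. Your first approach bypasses the rank comparison entirely: the identity $\omega_{n}^{+}\omega_{n}^{-}=\gamma^{p^{n}}-1$ immediately forces any element killed by $\omega_{n}^{\pm}$ to be $\Gamma_{n}$-fixed, so the inclusion is already surjective. Your dual variant is essentially the paper's computation carried one step further: rather than just matching ranks of $\Lambda^{2}/\omega_{n}^{\pm}$ and $\Lambda^{2}/(\omega_{n}^{\pm},\gamma^{p^{n}}-1)$, you observe that the second quotient \emph{equals} the first. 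The paper's approach is more robust in that it would still yield finiteness without the exact factorization, but here your argument is both simpler and sharper.
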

\begin{proof}
  This proof is the same way of \cite[Lemma 5.1]{Agb-How05}.
  It is sufficient to prove a claim that the $\Lambda$-corank of $\mathbb{H}_{n, \fr{p}}^{\pm}[\omega_{n}^{\pm}]$ coincides with the one of $\mathbb{H}_{w}^{\pm}[\omega_{n}^{\pm}]$.
  Since we have $(\mathbb{H}_{w}^{\pm})^{\vee} \simeq \Lambda^{2}$ by Proposition \ref{Key prop}, we see that 
  \begin{align}
   \mathbb{H}_{w}^{\pm}[\omega_{n}^{\pm}]^{\vee} &\simeq (\mathbb{H}_{w}^{\pm})^{\vee}/\omega_{n}^{\pm}(\mathbb{H}_{w}^{\pm})^{\vee} \\
   & \simeq (\Lambda/\omega_{n}^{\pm})^{2}.
  \end{align}
  On the other hand, the Pontryagin dual of $\mathbb{H}_{n, \fr{p}}^{\pm}[\omega_{n}^{\pm}]$ is equal to $(\Z_{p}[X]/(\omega_{n}^{\pm}))^{2}$.
  Therefore, it turns out the claim.
\end{proof}

\begin{thm}\label{コントロール定理}
Let $X_{n}^{\pm}$ and $X_{\infty}^{\pm}$ denote the dual of $\Sel_{p}^{\pm}(E/F_{n})$ and $\Sel_{p}^{\pm}(E/F_{\infty})^{\vee}$ respectively. 
Then, the kernel and the cokernel of a canonical homomorphism
  \begin{align}
    X_{\infty}^{\pm}/\omega_{n}^{\pm}X_{\infty}^{\pm} \rarrow X_{n}^{\pm}/\omega_{n}^{\pm}X_{n}^{\pm}
  \end{align}
  is a finite group.
\end{thm}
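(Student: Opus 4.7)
The strategy is to follow the classical Mazur-style snake-lemma argument for the standard control map $s_n \colon \Sel_p^{\pm}(E/F_n) \to \Sel_p^{\pm}(E/F_\infty)^{\Gal(F_\infty/F_n)}$, and then upgrade the $(\gamma^{p^n}-1)$-refined control statement obtained via Pontryagin duality to the finer $\omega_n^{\pm}$-refinement by an elementary module-theoretic argument.

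First I would set up the commutative diagram whose rows are the defining left-exact sequences of $\Sel_p^{\pm}(E/F_n)$ and $\Sel_p^{\pm}(E/F_\infty)^{\Gal(F_\infty/F_n)}$, with vertical restriction maps. The middle vertical map $g_n \colon H^1(F_{\Sigma}/F_n, A) \to H^1(F_{\Sigma}/F_\infty, A)^{\Gal(F_\infty/F_n)}$ is an isomorphism by inflation--restriction together with the vanishing $A^{G_{F_\infty}} \subset A^{G_{F_{\infty,w}}} = 0$ supplied by Proposition~\ref{p-torはKinfの外}. The right-hand local map must be analyzed place by place. At a place $v \nmid p$, a standard Greenberg-style computation shows that its kernel and cokernel are controlled by $H^i(\Gal(F_{\infty, v_\infty}/F_{n,v}), A^{G_{F_{\infty, v_\infty}}})$, which is finite because $A^{G_{F_{\infty, v_\infty}}}$ is finite (as $v \nmid p$) and the local decomposition group in $\Gal(F_\infty/F_n)$ is topologically cyclic. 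At a place $v \mid p$, by construction $\mathbb{H}_{n,\fr{p}}^{\pm} = (\mathbb{H}_w^{\pm})^{\Gal(F_\infty/F_n)}$, and combining this equality with the local analog of $g_n$ and the snake lemma reduces the cokernel of the local map to a subquotient of $H^1(\Gal(F_\infty/F_n), \mathbb{H}_w^{\pm})$. By Proposition~\ref{Key prop} this $H^1$ is the Pontryagin dual of $(\Lambda_{\mathcal{O}_p})^{\Gal(F_\infty/F_n)}$, which vanishes because $\Lambda_{\mathcal{O}_p}$ is an integral domain. Hence the local map at $v \mid p$ is in fact an isomorphism.

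Applying the snake lemma then shows that $s_n$ has finite kernel and cokernel, and taking Pontryagin duals translates this into the statement that the natural map $X_\infty^{\pm}/(\gamma^{p^n}-1) X_\infty^{\pm} \to X_n^{\pm}$ has finite kernel and cokernel. To deduce the refinement with $\omega_n^{\pm}$, I would invoke the elementary fact: if a $\Lambda$-homomorphism $\phi \colon A \to B$ has finite kernel and cokernel, then for any $f \in \Lambda$ the induced map $A/fA \to B/fB$ also has finite kernel and cokernel (split $0 \to \ker\phi \to A \to B \to \coker\phi \to 0$ into two short exact sequences and apply the snake lemma to each with multiplication by $f$). Applying this with $f = \omega_n^{\pm}$, and using that $\gamma^{p^n}-1 = \omega_n^{+}\omega_n^{-}$ so that the ideal $((\gamma^{p^n}-1), \omega_n^{\pm}) = (\omega_n^{\pm})$ in $\Lambda$, the theorem follows.

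The main delicate point is the local analysis at places $v \nmid p$, but this is entirely standard in Iwasawa theory and requires only the finiteness of $A^{G_{F_{\infty, v_\infty}}}$ and cohomology of a topologically cyclic group with finite coefficients. The plus/minus-specific structure enters essentially only at primes above $p$, where the free $\Lambda_{\mathcal{O}_p}$-structure of $(\mathbb{H}_w^{\pm})^{\vee}$ from Proposition~\ref{Key prop} makes the local control analysis especially clean (in fact an isomorphism rather than merely finite kernel and cokernel).
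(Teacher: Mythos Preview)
Your argument is correct. The paper itself gives no proof here beyond citing \cite[Theorem~5.2]{Agb-How05}, so in fact you supply more than the paper does.

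There is, however, a difference in strategy worth noting. The paper records Lemma~\ref{コントロール定理の補題} immediately before the theorem, which computes the (finite) cokernel of $f_n^{\pm}\colon \mathbb{H}_{n,\fr{p}}^{\pm}[\omega_n^{\pm}] \to \mathbb{H}_w^{\pm}[\omega_n^{\pm}]$; this is the local input Agboola--Howard use to run the snake-lemma comparison \emph{directly} at the level of $\omega_n^{\pm}$-torsion. Your route is different: you first prove the ordinary control statement for $(\gamma^{p^n}-1)$---where, thanks to Proposition~\ref{Key prop}, the local map at $p$ is even an isomorphism rather than merely finite-cokernel---and only afterwards pass to $\omega_n^{\pm}$ by the purely module-theoretic observation that a $\Lambda$-map with finite kernel and cokernel retains that property after reducing modulo any $f\in\Lambda$. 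This two-step approach cleanly decouples the analytic control from the signed refinement and in particular bypasses Lemma~\ref{コントロール定理の補題} entirely. The Agboola--Howard route is more direct but requires the extra local computation; yours is slightly longer but more modular.

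One small remark on your write-up: at places $v\nmid p$ you do not actually need finiteness of $A^{G_{F_{\infty,v_\infty}}}$ (which can fail in anticyclotomic towers). It suffices that $H^{1}(F_{n,v_n},A)$ itself is finite for $v_n\nmid p$, so the kernel of the local restriction map is automatically finite.
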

 \begin{proof}
  See \cite[Theorem 5.2]{Agb-How05}.
 \end{proof}

\section{Proof of Main Theorem}
\noindent
In this section, we prove the main theorem and the corollary.
The way of the proof is the same as \cite[Section 4]{Gre99} or \cite[Section 3]{Kim13}.
Fix $s \in \Z_{p}$.

\begin{dfn}
  For any non-negative integers $n$ and any primes $v_{n}$ of $F_{n}$, we define the local conditions for $A_{s}$ as
  \begin{align}
    H_{\mathscr{F}^{\pm}}^{1}(F_{n,v_{n}}, A_{s}):=\left\{ \begin{array}{ll}
    \mathbb{H}_{n, \fr{p}}^{s, \pm} & (v_{n} \mid \fr{p})\\
    0 & (v_{n} \nmid \fr{p})
    \end{array}\right. .
  \end{align}
The local conditions $H_{\mathscr{F}^{\pm}}^{1}(F_{n,v_{n}}, T_{-s})$ for $T_{-s}$ is defined by the set of the annihilators of $H_{\mathscr{F}^{\pm}}^{1}(F_{n,v_{n}}, A_{s})$ with respect to the local Tate pairing
\begin{align}
H^{1}(F_{n,v_{n}}, A_{s}) \times H^{1}(F_{n,v_{n}}, T_{-s}) \rarrow \Q_{p}/\Z_{p}.
\end{align}
\end{dfn}

Let $U_{-s}^{\pm}$ be a $\Q_{p}$-subspace of $H^{1}(F_{n,v_{n}}, V_{-s})$ generated by $H_{\mathscr{F}^{\pm}}^{1}(F_{n,v_{n}}, T_{-s})$.

\begin{dfn}
  We define the local condition $H_{\mathscr{F}^{\pm}}^{1}(F_{n,v_{n}}, A_{-s})$ for $A_{-s}$ by the image of $U_{-s}^{\pm}$ under the canonical map $H^{1}(F_{n,v_{n}}, V_{-s}) \rarrow H^{1}(F_{n,v_{n}}, A_{-s})$.
\end{dfn}

\begin{rem}
  The local conditions for $A_{s}$ and $A_{-s}$ are $p$-divisible by the definition of $\bb{H}_{n, \fr{p}}^{s, \pm}$.
  If a prime $v_{n}$ of $F_{n}$ is not lying $p$, then we also have $H_{\mathscr{F}^{\pm}}^{1}(F_{n, v_{n}}, A_{-s})=0$ because $H^{1}(F_{n, v_{n}}, A_{-s})$ is a finite group.
\end{rem}

Note that we defined $\Sigma$ as the set of primes of $F$ including bad all primes of $E$, primes lying over $p$, and infinity primes.
We define the following:
 \begin{align}
  &P_{n}:=\prod_{v_{n} \mid \fr{q}, \fr{q} \in \Sigma} H^{1}(F_{n, v_{n}}, A_{s}), \quad 
  L_{n}^{\pm}:=\prod_{v_{n} \mid \fr{q}, \fr{q} \in \Sigma} H_{\mathscr{F}^{\pm}}^{1} (F_{n,v_{n}}, A_{s}).
\end{align}
In the case of $n=0$, we omit the indexes and denote $P$ and $L^{\pm}$ respectively.
% Let $\loc_{n}: H^{1}(F_{\Sigma}/F_{n}, A_{s}) \rarrow P_{n}$ be the homomorphisms induced by the global-local map and $S_{A_{s}}^{\pm}(F_{n})$ be the kernel of the composition map of $\loc_{n}$ and the natural projection $P_{n} \rarrow P_{n}/L_{n}^{\pm}$.

\begin{dfn}
  Let $\loc_{n}: H^{1}(F_{\Sigma}/F_{n}, A_{s}) \rarrow P_{n}$ be the homomorphisms induced by the global-local map, and $\psi_{n}:H^{1}(F_{\Sigma}/F_{n}, A_{s}) \rarrow P_{n}/L_{n}^{\pm}$ be the composition map of $\loc_{n}$ and the natural projection $P_{n} \rarrow P_{n}/L_{n}^{\pm}$.
  Then, we define 
  \begin{align}
    S_{A_{s}}^{\pm}(F_{n}):=\Ker \psi_{n}.
  \end{align}
\end{dfn}

 \begin{prp}\label{Poitou-Tate}
  Suppose that $S_{A_{s}}^{\pm}(F_{n})$ is a finite group.
  Then, the map $\psi_{n}:H^{1}(F_{\Sigma}/F_{n}, A_{s}) \rarrow P_{n}/L_{n}^{\pm}$ is surjective.
 \end{prp}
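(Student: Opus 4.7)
The plan is to apply Poitou--Tate global duality to identify $\operatorname{coker}(\psi_n)$ with a subgroup of the Pontryagin dual of a dual Selmer group for $T_{-s}$, and then show that this dual Selmer group vanishes.

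By Proposition \ref{twistありのexact annihilator}, at each place $v_n \mid \fr{p}$ the local conditions $H^1_{\mathscr{F}^{\pm}}(F_{n,v_n}, A_s)$ and $H^1_{\mathscr{F}^{\pm}}(F_{n,v_n}, T_{-s})$ are exact annihilators under the local Tate pairing; away from $p$ this holds trivially. Applying the Poitou--Tate nine-term exact sequence as in \cite[Section 4]{Gre99} and \cite[Section 3]{Kim13} then produces an exact sequence
\begin{align}
  S_{A_s}^{\pm}(F_n) \hookrightarrow H^1(F_{\Sigma}/F_n, A_s) \xrightarrow{\psi_n} P_n/L_n^{\pm} \longrightarrow S_{T_{-s}}^{\pm}(F_n)^{\vee},
\end{align}
where $S_{T_{-s}}^{\pm}(F_n)$ denotes the Selmer group for $T_{-s}$ cut out by the dual local conditions. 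Hence $\operatorname{coker}(\psi_n)$ embeds into $S_{T_{-s}}^{\pm}(F_n)^{\vee}$, and it suffices to prove $S_{T_{-s}}^{\pm}(F_n) = 0$.

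This is achieved in two steps. First, $S_{T_{-s}}^{\pm}(F_n)$ is $\Z_p$-torsion-free: since $A_s^{G_{F_{n,\fr{p}}}} = 0$ for each $v_n \mid \fr{p}$ (as noted just after Proposition \ref{p-torはKinfの外}), restriction to a prime above $p$ yields $A_{-s}[p]^{G_{F_n}} = 0$, and the long exact sequence attached to $0 \to T_{-s} \xrightarrow{p} T_{-s} \to A_{-s}[p] \to 0$ forces $H^1(F_{\Sigma}/F_n, T_{-s})$, and hence its Selmer subgroup, to have no $p$-torsion. Second, a standard corank accounting based on Tate's global Euler characteristic formula gives
\begin{align}
  \operatorname{rank}_{\Z_p} S_{T_{-s}}^{\pm}(F_n) = \operatorname{corank}_{\Z_p} S_{A_s}^{\pm}(F_n),
\end{align}
the crucial local input being that at each $v_n \mid \fr{p}$
\begin{align}
  \operatorname{corank}_{\Z_p} H^1_{\mathscr{F}^{\pm}}(F_{n,v_n}, A_s) = \operatorname{rank}_{\Z_p} H^1_{\mathscr{F}^{\pm}}(F_{n,v_n}, T_{-s}) = 2p^n,
\end{align}
exactly half of $\operatorname{corank}_{\Z_p} H^1(F_{n,v_n}, A_s) = 4p^n$; this follows from $(\mathbb{H}_{n,\fr{p}}^{\pm})^{\vee} \simeq \Lambda_n^2$ (Proposition \ref{Key prop}) together with Proposition \ref{twistありのexact annihilator}. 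Under the hypothesis that $S_{A_s}^{\pm}(F_n)$ is finite, its corank vanishes, so the rank of $S_{T_{-s}}^{\pm}(F_n)$ also vanishes; combined with torsion-freeness, we conclude $S_{T_{-s}}^{\pm}(F_n) = 0$, and $\psi_n$ is surjective.

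The main obstacle will be the corank accounting step: one must carefully match the local contributions at primes above $p$ against the global Euler characteristic, ensuring that no residual boundary term in the Poitou--Tate sequence spoils the displayed rank/corank equality. The key structural point is that the $\pm$-conditions at $\fr{p}$ have corank exactly $[F_{n,v_n} : \Q_{p}]$ times the dimension of the formal group (which is $1$), which is precisely what is required for Greenberg's local defects to cancel.
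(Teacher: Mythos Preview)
Your proposal is correct and follows exactly the approach of \cite[Proposition 3.8]{Kim13} and \cite[\S4]{Gre99}, which is precisely what the paper does: its entire proof consists of the citation ``See \cite[Proposition 3.8]{Kim13} and \cite{Gre99}.'' One small typo: in the torsion-freeness step you invoke $A_s^{G_{F_{n,\fr{p}}}}=0$ but the conclusion $A_{-s}[p]^{G_{F_n}}=0$ requires $A_{-s}^{G_{F_{n,\fr{p}}}}=0$; this is harmless since the paper notes the vanishing holds for every twist $s\in\Z_p$.
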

 \begin{proof}
  See \cite[Proposition 3.8]{Kim13} and \cite{Gre99}.
 \end{proof}

\begin{lem}\label{コントロール}
  The kernel and cokernel of the map $S_{A_{s}}^{\pm}(F_{n}) \rarrow S_{A_{s}}^{\pm}(F_{\infty})^{\Gal(F_{\infty}/F_{n})}$ are finite group and bounded as $n$ varies.
\end{lem}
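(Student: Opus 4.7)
The plan is to apply the snake lemma to the commutative diagram whose rows are the defining sequences
\[ 0 \rarrow S_{A_{s}}^{\pm}(F_{n}) \rarrow H^{1}(F_{\Sigma}/F_{n}, A_{s}) \overset{\psi_{n}}{\rarrow} P_{n}/L_{n}^{\pm} \]
and its $F_{\infty}$-analogue with $\Gamma_{n}$-invariants (where $\Gamma_{n} := \Gal(F_{\infty}/F_{n})$), and whose vertical maps $\alpha$, $r_{n}$, $g_{n}$ are the natural restriction maps. The snake lemma presents $\Ker \alpha$ as a subgroup of $\Ker r_{n}$ and $\Coker \alpha$ as a subquotient of $\Ker g_{n}$ (up to $\Coker r_{n}$), so the problem reduces to bounding $\Ker r_{n}$, $\Coker r_{n}$, and $\Ker g_{n}$ uniformly in $n$.

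For $r_{n}$, inflation--restriction identifies $\Ker r_{n}$ and $\Coker r_{n}$ with subquotients of $H^{i}(\Gamma_{n}, A_{s}^{G_{F_{\infty}}})$ for $i = 1, 2$. Since $\kappa^{s}$ factors through $\Gal(F_{\infty}/F)$, it is trivial on $G_{F_{\infty}}$, so $A_{s}^{G_{F_{\infty}}} = A^{G_{F_{\infty}}}$, and this group injects into $\hat{E}(F_{\infty, w})[p^{\infty}]$, which vanishes by Proposition \ref{p-torはKinfの外}. Hence $r_{n}$ is an isomorphism.

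For $g_{n} = \prod_{v_{n}} g_{n, v_{n}}$, I split over primes $v_{n}$ of $F_{n}$ in $\Sigma$. When $v_{n} \mid \fr{p}$: since $\fr{p}$ is totally ramified in $F_{\infty}/F$ we have $F_{n, v_{n}} = F_{n, \fr{p}}$, and the definition $\mathbb{H}_{n, \fr{p}}^{s, \pm} = (\mathbb{H}_{w}^{s, \pm})^{\Gal(F_{\infty, w}/F_{n, \fr{p}})}$, combined with the restriction isomorphism $H^{1}(F_{n, v_{n}}, A_{s}) \simeq H^{1}(F_{\infty, w}, A_{s})^{\Gal(F_{\infty, w}/F_{n, \fr{p}})}$ from Proposition \ref{p-torはKinfの外}, forces $\Ker g_{n, v_{n}} = 0$. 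When $v_{n} \nmid \fr{p}$: both local conditions vanish, so $\Ker g_{n, v_{n}} = H^{1}(\Gal(F_{\infty, w}/F_{n, v_{n}}), A_{s}^{G_{F_{\infty, w}}})$, which is finite because $A_{s}^{G_{F_{\infty, w}}} \subset A[p^{\infty}]$ is a finite group.

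The hard part will be the \emph{uniform} bound in $n$ at primes $v_{n} \nmid \fr{p}$. For each fixed $\fr{q} \in \Sigma \setminus \{\fr{p}\}$: if $\fr{q}$ splits completely in $F_{\infty}/F$ then the decomposition groups $\Gal(F_{\infty, w}/F_{n, v_{n}})$ are trivial and the local contribution is $0$; otherwise the Frobenius at $\fr{q}$ generates an open subgroup of $\Gamma$, so the number of primes $v_{n}$ of $F_{n}$ above $\fr{q}$ stabilizes for $n$ large, and each corresponding $H^{1}(\Z_{p}\text{-quotient}, A_{s}^{G_{F_{\infty, w}}})$ has order bounded by a fixed multiple of $|A^{G_{F_{\infty, w}}}|$, independent of $n$. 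Summing over the finite set $\Sigma \setminus \{\fr{p}\}$ then yields the uniform bound on $\Ker g_{n}$. Feeding this through the snake lemma gives $\Ker \alpha = 0$ and $\Coker \alpha \hookrightarrow \Ker g_{n}$, both finite and bounded as $n$ varies, as required.
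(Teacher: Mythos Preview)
Your snake-lemma strategy is correct and is the standard argument (the paper simply cites \cite[Proposition~3.9]{Kim13}, which proceeds in just this way); your treatment of the global restriction $r_{n}$ and of the local maps at primes above $p$ is fine.

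The gap is at primes $v_{n} \nmid p$. You assert that $A_{s}^{G_{F_{\infty,w}}}$ is a finite group, but the stated reason ``$A_{s}^{G_{F_{\infty,w}}} \subset A[p^{\infty}]$'' proves nothing, since $A[p^{\infty}] = A \simeq (\Q_{p}/\Z_{p})^{2}$ is itself infinite --- and in fact the claim can fail. For instance, if $E$ has split multiplicative reduction at a prime $\fr{q}$ that is finitely decomposed in $F_{\infty}/F$ and whose residue cardinality satisfies $q \equiv 1 \pmod{p}$, then $F_{\infty,w}$ is the unramified $\Z_{p}$-extension of $F_{\fr{q}}$, hence contains $\mu_{p^{\infty}}$, and the Tate parametrization embeds $\mu_{p^{\infty}}$ into $E(F_{\infty,w})[p^{\infty}] = A_{s}^{G_{F_{\infty,w}}}$. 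What actually bounds $H^{1}(\Gal(F_{\infty,w}/F_{n,v_{n}}),\,B)$ for $B := A_{s}^{G_{F_{\infty,w}}}$ is not the size of $B$ but the finiteness of its invariants $B^{\Gamma_{n}} = A_{s}^{G_{F_{n,v_{n}}}}$ at each \emph{finite} layer: dualizing, this says $\gamma_{n}-1$ is injective on $B^{\vee}\otimes\Q_{p}$, so $(B_{\Gamma_{n}})^{\vee} = (B^{\vee})^{\gamma_{n}=1} \subset (B^{\vee})_{\mathrm{tors}}$, which is bounded independently of $n$. For $s=0$ the finite-layer input is just $E(F_{n,v_{n}})[p^{\infty}]$, finite because $F_{n,v_{n}}$ is $\ell$-adic with $\ell \neq p$; for general $s$ one must exclude an exceptional set of values, consistent with the generic choice of $s$ actually made in the proof of Theorem~\ref{主定理}.
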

\begin{proof}
  See \cite[Proposition 3.9]{Kim13}.
\end{proof}

\begin{prp}\label{目標の全射}
  If the plus/minus Selmer group $\Sel_{p}^{\pm}(E/F_{\infty})$ is $\Lambda$-cotorsion, then
  \begin{align}
    H^{1}(F_{\Sigma}/F_{\infty}, A) \rarrow \prod_{\substack{v_{\infty} \mid \fr{q},~ \fr{q} \in \Sigma}} \frac{H^{1}(F_{\infty, v_{\infty}}, A)}{H_{\mathscr{F}^{\pm}}^{1}(F_{\infty, v_{\infty}}, A)} 
  \end{align}
  is surjective.
\end{prp}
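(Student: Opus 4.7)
The plan is to adapt Kim's strategy from \cite[Proposition 3.10]{Kim13}, which itself descends from Greenberg's argument in the ordinary case. The core idea is to use the twisting mechanism already set up in the previous section to reduce the infinite-level surjectivity to a Poitou--Tate argument at each finite layer, pass to the limit, and then untwist to recover the case of $A$.

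First, since $\Sel_{p}^{\pm}(E/F_{\infty})$ is $\Lambda$-cotorsion by hypothesis, its Pontryagin dual $X_{\infty}^{\pm}$ is a torsion $\Lambda$-module with nonzero characteristic ideal. For all but finitely many $s \in \Z_{p}$, the specialization of this characteristic element at the twist character $\kappa^{s}$ is nonzero. Combined with the control theorem (Theorem \ref{コントロール定理}) and Lemma \ref{コントロール}, such a choice of $s$ forces the twisted Selmer group $S_{A_{s}}^{\pm}(F_{n})$ to be finite for every $n$. Proposition \ref{Poitou-Tate} then gives, for every $n$, the surjectivity of
\begin{align*}
\psi_{n}: H^{1}(F_{\Sigma}/F_{n}, A_{s}) \rarrow P_{n}/L_{n}^{\pm}.
\end{align*}

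Next, I would pass to the direct limit over $n$. Using that Galois cohomology of discrete coefficients commutes with filtered direct limits, and that the local conditions away from $p$ are trivial for the discrete module $A_{s}$, this yields surjectivity of the analogous global-to-local map at $F_{\infty}$ with coefficients in $A_{s}$. Finally, to recover the case $s = 0$, I would argue that the cokernels across twists are specializations of a single finitely generated $\Lambda$-module; since this cokernel vanishes for a Zariski-dense set of $s$ (namely those outside the finite zero locus of the characteristic ideal), it must vanish identically, and in particular at $s = 0$.

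The main obstacle I anticipate is the untwisting step: one has to identify all the cokernels for varying $s$ as specializations of a coherent $\Lambda$-module object, and then invoke a density-style vanishing argument. An alternative route, which may sidestep this difficulty, is to handle $s = 0$ directly by exploiting the self-orthogonality of $\mathbb{H}_{n,\fr{p}}^{\pm}[p^{j}]$ from Proposition \ref{twistなしのexact annihilator}; this would, however, require independently verifying the finiteness of the \emph{untwisted} Selmer groups at finite levels, which does not follow automatically from the $\Lambda$-cotorsion hypothesis and is precisely the reason the twist was introduced in the first place.
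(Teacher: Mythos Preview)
Your outline is the same as Kim's argument in \cite[Proposition 3.10]{Kim13}, which is exactly what the paper cites, and steps (i)--(iii)---choosing a twist $s$ so that $S_{A_s}^{\pm}(F_n)$ is finite for all $n$, applying Proposition~\ref{Poitou-Tate} at each layer, and taking the direct limit---are correct.

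The ``main obstacle'' you flag at the untwisting step is a phantom, and your proposed density/specialization argument is both unnecessary and not well-posed. The point you are missing is that $\kappa^{s}$ is a character of $\Gamma = \Gal(F_{\infty}/F)$ and is therefore trivial on $G_{F_{\infty}}$ (and on each $G_{F_{\infty,v_{\infty}}}$). Hence $A_{s} = A$ as $G_{F_{\infty}}$-modules, and the global-to-local map for $A_{s}$ over $F_{\infty}$ is \emph{literally the same map of abelian groups} as the one for $A$; only the external $\Gamma$-action is twisted. So surjectivity for any single $s$ immediately gives surjectivity for $s = 0$. There is no family of distinct cokernels indexed by $s$ to interpolate: they are all canonically identified with the same object, and your ``Zariski-dense vanishing'' picture does not apply here.
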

\begin{proof}
  See \cite[Proposition 3.10]{Kim13}.
\end{proof}

By Proposition \ref{目標の全射} and the definition of the plus/minus Selmer groups, if let
\begin{align}
  \cal{P}_{E}^{\Sigma}(F_{\infty})^{\pm} := \prod_{\substack{v_{\infty} \mid \fr{q},~ \fr{q} \in \Sigma}} \frac{H^{1}(F_{\infty, v_{\infty}}, A)}{H_{\mathscr{F}^{\pm}}^{1}(F_{\infty, v_{\infty}}, A)},
\end{align}
then we obtain the following exact sequence as $\Lambda$-module: 
\begin{align}
  0 \rarrow \Sel_{p}^{\pm}(E/F_{\infty}) \rarrow H^{1}(F_{\Sigma}/F_{\infty}, A) \rarrow \mathcal{P}_{E}^{\Sigma}(F_{\infty})^{\pm} \rarrow 0. \label{important exact}
\end{align}

\begin{prp}\label{1次と2次のLambda-corank}
  If $\Sel_{p}^{\pm}(E/F_{\infty})$ is a $\Lambda$-cotorsion, then the $\Lambda$-corank of $H^{1}(F_{\Sigma}/F_{\infty}, A)$ is equal to $[F:\Q]$, and $H^{2}(F_{\Sigma}/F_{\infty}, A)$ is a $\Lambda$-cotorsion.
\end{prp}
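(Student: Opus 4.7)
The plan is to compute $\corank_\Lambda H^1(F_\Sigma/F_\infty, A)$ by a local analysis via the exact sequence \eqref{important exact}, and then derive the $\Lambda$-cotorsion of $H^2$ from the global Euler--Poincar\'e characteristic for $A$.

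Under the $\Lambda$-cotorsion hypothesis on $\Sel_p^\pm(E/F_\infty)$, the exact sequence \eqref{important exact} reduces the first claim to showing $\corank_\Lambda \mathcal{P}_E^\Sigma(F_\infty)^\pm = [F:\Q]$. Since $F \supset K$ is totally imaginary, archimedean places contribute nothing. At non-archimedean primes $v \in \Sigma$ with $v \nmid p$, it is standard (cf.\ \cite{Gre99, Kim13}) that $\prod_{w \mid v} H^1(F_{\infty, w}, A)$ is $\Lambda$-cotorsion, contributing zero as well. At each of the $[F:K] = [F:\Q]/2$ primes $\mathfrak{p}$ of $F$ above $p$ (which are totally ramified in $F_\infty/F$, so that the local Iwasawa algebra equals $\Lambda$), Proposition \ref{p-torはKinfの外} makes $H^0(F_{\infty, w}, A)$ finite, and local Tate duality together with the supersingular condition forces $H^2(F_{\infty, w}, A)$ to be $\Lambda$-cotorsion of corank $0$; the local Euler--Poincar\'e formula then yields $\corank_\Lambda H^1(F_{\infty, w}, A) = [F_\mathfrak{p}:\Q_p] \dim_{\Q_p} V = 4$. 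Proposition \ref{Key prop} gives $\corank_\Lambda \mathbb{H}_w^\pm = 2$, so each such prime contributes $4 - 2 = 2$, yielding $\corank_\Lambda \mathcal{P}_E^\Sigma(F_\infty)^\pm = 2[F:K] = [F:\Q]$.

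For the $\Lambda$-cotorsion of $H^2$, I would apply the global Euler--Poincar\'e formula at each finite layer $F_n$ (totally imaginary), giving
\begin{align}
\sum_i (-1)^i \corank_{\Z_p} H^i(F_\Sigma/F_n, A) = -[F_n:\Q],
\end{align}
which arises from the archimedean contribution $r_2(F_n) \cdot \corank_{\Z_p} A$ against the global correction $\dim_{\Q_p} V \cdot [F_n:\Q]$. Dividing by $p^n$ and passing to the $\Lambda$-adic limit (Jannsen's formula), and using that $H^0(F_\Sigma/F_\infty, A) = E(F_\infty)[p^\infty]$ has $\Lambda$-corank $0$ by Proposition \ref{p-torはKinfの外}, one obtains $-\corank_\Lambda H^1 + \corank_\Lambda H^2 = -[F:\Q]$, which combined with $\corank_\Lambda H^1 = [F:\Q]$ forces $\corank_\Lambda H^2(F_\Sigma/F_\infty, A) = 0$.

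The main obstacle will be the rigorous justification of the $\Lambda$-adic Euler--Poincar\'e identity and the uniform verification of $\Lambda$-cotorsion for the local $H^1$'s at bad primes of $\Sigma$ away from $p$; both are standard (see \cite{Gre99, Kim13}) and transfer to our setting without essential modification.
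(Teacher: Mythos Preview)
Your proposal is correct and follows essentially the same route as the paper: compute $\corank_\Lambda \mathcal{P}_E^\Sigma(F_\infty)^\pm$ by the local analysis (cotorsion away from $p$, and at each $\fr{p}\mid p$ the difference $4-2=2$, summed over the $[F:K]$ primes), then deduce $\corank_\Lambda H^2=0$ from the global Euler--Poincar\'e identity. The only cosmetic difference is that the paper cites \cite[Proposition~1]{Gre89} for the local corank at $p$ and \cite[Proposition~3]{Gre89} for the $\Lambda$-adic Euler characteristic, whereas you rederive both from the finite-level formulae and a limit argument; the content is the same.
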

\begin{proof}
  Fix any $\fr{q} \in \Sigma$.
  In case of $\fr{q} \neq \fr{p}$, we see that $\prod_{v_{\infty} \mid \fr{q}} H^{1}(F_{\infty, v_{\infty}}, A)$ is a $\Lambda$-cotorsion by the similar way of \cite{Gre99}.
  In case of $\fr{q} \mid \fr{p}$, the $\Lambda$-corank of $H^{1}(F_{\infty, w}, A)$ is $2[F:\Q]$ by \cite[Propostion 1]{Gre89} and the $\Lambda$-corank of $\mathbb{H}_{w}^{\pm}$ is $[F_{\fr{p}}:\Q_{p}]=[F:\Q]$ by Proposition \ref{Key prop}.
  Therefore, we see 
  \begin{align}
    \corank_{\Lambda}\mathcal{P}_{E}^{\Sigma}(F_{\infty})^{\pm}=2[F:\Q]-[F:\Q]=[F:\Q].
  \end{align}
  By the exact sequence \eqref{important exact} and the assumption that $\Sel_{p}^{\pm}(E/F_{\infty})$ is a $\Lambda$-cotorsion, we obtain 
  \begin{align}
    \corank_{\Lambda} H^{1}(F_{\Sigma}/F_{\infty}, A)=\corank_{\Lambda} \mathcal{P}_{E}^{\Sigma}(F_{\infty})^{\pm}=[F:\Q].
  \end{align}
  By \cite[Proposition 3]{Gre89}, we also obtain $\corank_{\Lambda} H^{2}(F_{\Sigma}/F_{\infty}, A)=0$.
\end{proof}

\begin{prp}\label{Greenbergからの引用}
  If $H^{2}(F_{\Sigma}/F_{\infty}, A)$ is a $\Lambda$-cotorsion, then $H^{1}(F_{\Sigma}/F_{\infty}, A)$ has no non-trivial $\Lambda$-submodules with finite index.
\end{prp}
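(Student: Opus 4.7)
The plan is to adapt the Greenberg-style argument from \cite[Proposition 4.14]{Gre99}, as presented in \cite[Proposition 3.11]{Kim13}. Setting $X := H^{1}(F_{\Sigma}/F_{\infty}, A)^{\vee}$, Pontryagin duality translates the desired statement into the assertion that $X$ contains no non-trivial finite $\Lambda$-submodule. Since $\Lambda \simeq \Z_{p}\llbracket T \rrbracket$ is a two-dimensional regular local ring, finite $\Lambda$-submodules of a finitely generated $\Lambda$-module coincide with pseudo-null submodules, so the goal reduces to showing that the maximal pseudo-null $\Lambda$-submodule of $X$ is zero.

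The first preparation is the global vanishing $A^{G_{F_{\infty}}}=0$. This follows from Proposition \ref{p-torはKinfの外}: any $p^{\infty}$-torsion point of $E(F_{\infty})$ would localize to a $p^{\infty}$-torsion point of $\hat{E}(F_{\infty,w})$, which is trivial. The inflation--restriction spectral sequence then yields natural isomorphisms $H^{1}(F_{\Sigma}/F_{n}, A) \xrightarrow{\sim} H^{1}(F_{\Sigma}/F_{\infty}, A)^{\Gamma_{n}}$ for every $n \geq 0$, enabling a clean comparison between Iwasawa cohomology and finite-layer cohomology without auxiliary error terms.

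The core of the argument is a Jannsen/Poitou--Tate-type duality computation: the maximal pseudo-null submodule of $X$ is identified with an Ext-term of the shape $\mathrm{Ext}^{2}_{\Lambda}(Z, \Lambda)$, where $Z$ is (up to pseudo-isomorphism) the Pontryagin dual of $H^{2}(F_{\Sigma}/F_{\infty}, A)$. The hypothesis is precisely that $Z$ is $\Lambda$-torsion; combined with the fact that $\Lambda$ has global dimension $2$ and that $\mathrm{Ext}^{i}_{\Lambda}(-,\Lambda)$ of a torsion module vanishes for $i=0$, a spectral sequence chase forces the pseudo-null part of $X$ to vanish. Alternatively, and more in the spirit of Greenberg's original proof, one can argue by contradiction: a non-trivial finite $\Lambda$-submodule $Y \subset X$ would, via the inflation--restriction isomorphism and the control theorem (Theorem \ref{コントロール定理}), force $H^{1}(F_{\Sigma}/F_{n}, A)^{\vee}$ to be strictly larger than predicted by the corank computation of Proposition \ref{1次と2次のLambda-corank}, a contradiction.

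The main obstacle I anticipate is the careful bookkeeping in the duality/Ext argument and verifying that nothing in the cited arguments tacitly used the ordinary (or purely cyclotomic) hypothesis present there. The only anticyclotomic-specific input is the vanishing $A^{G_{F_{\infty}}}=0$ together with the structural results of Section 2 on the plus/minus local conditions, both of which we have already established; beyond this, the argument is formal and proceeds identically to the cyclotomic case.
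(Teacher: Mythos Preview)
Your sketch has a genuine gap in approach (a). You claim that the maximal pseudo-null submodule of $X=H^{1}(F_{\Sigma}/F_{\infty},A)^{\vee}$ is $\mathrm{Ext}^{2}_{\Lambda}(Z,\Lambda)$ with $Z=H^{2}(F_{\Sigma}/F_{\infty},A)^{\vee}$, and then argue that this vanishes because $Z$ is $\Lambda$-torsion and $\mathrm{Ext}^{0}_{\Lambda}(Z,\Lambda)=0$. But vanishing of $\mathrm{Ext}^{0}$ has nothing to do with vanishing of $\mathrm{Ext}^{2}$: for instance, $Z=\Lambda/(p,\gamma-1)$ is torsion yet $\mathrm{Ext}^{2}_{\Lambda}(Z,\Lambda)\cong Z\neq 0$. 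Over the two-dimensional regular local ring $\Lambda$, one has $\mathrm{Ext}^{2}_{\Lambda}(Z,\Lambda)=0$ if and only if $\mathrm{pd}_{\Lambda}Z\leq 1$, i.e.\ if and only if $Z$ itself has no nonzero finite $\Lambda$-submodule. So the cotorsion hypothesis on $H^{2}$ is \emph{not} by itself what makes the argument work. The missing ingredient---and this is really the heart of the Greenberg/Kim proof---is that $H^{2}(F_{\Sigma}/F_{\infty},A)$ is $p$-divisible, because $F_{\infty}$ is totally imaginary and hence $\mathrm{cd}_{p}(\Gal(F_{\Sigma}/F_{\infty}))\leq 2$, forcing $H^{3}(F_{\Sigma}/F_{\infty},A[p])=0$. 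Thus $Z$ has no $p$-torsion, hence no finite $\Lambda$-submodule, and only then does the Ext term vanish. You also leave the spectral-sequence identification of the maximal finite submodule with $\mathrm{Ext}^{2}_{\Lambda}(Z,\Lambda)$ entirely unjustified; this is not automatic and requires a careful Jannsen/Nekov\'a\v{r}-type duality argument.

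Your alternative (b) does not work as stated either. Theorem \ref{コントロール定理} is a control theorem for the plus/minus Selmer groups, not for the bare cohomology $H^{1}(F_{\Sigma}/F_{\infty},A)$, so it cannot be invoked here. And a nonzero finite submodule of $X$ contributes nothing to $\Lambda$-ranks or $\Z_{p}$-coranks, so it cannot produce a ``strictly larger than predicted'' contradiction with Proposition \ref{1次と2次のLambda-corank}. The actual argument in \cite{Gre99} and \cite{Kim13} proceeds instead via the Hochschild--Serre sequence for $\Gamma$ (using $\mathrm{cd}_{p}(\Gamma)=1$), together with the $p$-divisibility of $H^{2}$ at both the finite and infinite levels, to show directly that $H^{1}(F_{\Sigma}/F_{\infty},A)/\theta$ is $p$-divisible for the relevant $\theta$; no Selmer control theorem enters.
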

\begin{proof}
  The proof is the same wey of \cite[Proposition 3.13]{Kim13} or \cite[Proposition 4.9]{Gre99}.
\end{proof}

From the above preparations, we can prove the main theorem.

\begin{thm}\label{主定理}
  Assume that the formal group $\hat{E}$ of $E$ defined over $\cal{O}_{\fr{p}}$ is isomorphic to the Lubin-Tate formal group of height 2 with a parameter $-p$.
  Then, if $\Sel_{p}^{\pm}(E/F_{\infty})$ is a $\Lambda$-cotorsion, $\Sel_{p}^{\pm}(E/F_{\infty})$ has no non-trivial subgroups with finite index.
\end{thm}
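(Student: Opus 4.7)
The strategy is to follow the Greenberg--Kim template (\cite[Section~4]{Gre99}, \cite[Section~3]{Kim13}). The starting observation is that the hypothesis that $\Sel_{p}^{\pm}(E/F_{\infty})$ is $\Lambda$-cotorsion, together with Proposition \ref{1次と2次のLambda-corank}, forces $H^{2}(F_{\Sigma}/F_{\infty}, A)$ to also be $\Lambda$-cotorsion; then Proposition \ref{Greenbergからの引用} yields that $H^{1}(F_{\Sigma}/F_{\infty}, A)$ itself has no non-trivial $\Lambda$-submodule of finite index. The task is therefore to transfer this property through the fundamental short exact sequence
\[
0 \to \Sel_{p}^{\pm}(E/F_{\infty}) \to H^{1}(F_{\Sigma}/F_{\infty}, A) \to \mathcal{P}_{E}^{\Sigma}(F_{\infty})^{\pm} \to 0
\]
supplied by Proposition \ref{目標の全射} and the definition of the plus/minus Selmer group, so as to conclude the same property for the kernel $\Sel_{p}^{\pm}(E/F_{\infty})$.

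I would carry out the transfer by Greenberg's twisting trick. Assume for contradiction that $X := \Sel_{p}^{\pm}(E/F_{\infty})^{\vee}$ admits a non-trivial finite $\Lambda$-submodule $Y$. For each $s \in \Z_{p}$, the twisted Selmer groups $S_{A_{s}}^{\pm}(F_{n})$ defined at the start of Section~5 sit in an analogous formalism. The control theorems (Theorem \ref{コントロール定理} and Lemma \ref{コントロール}) show that the specializations $X/\omega_{n}^{\pm}X$ and their twists are tightly approximated by $S_{A_{s}}^{\pm}(F_{n})^{\vee}$ up to bounded error, so the hypothetical $Y$ would force a fixed finite piece to persist in these specializations for all $n$ and for a cofinal set of $s \in \Z_{p}$. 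On the other hand, for those $s$ at which $S_{A_{s}}^{\pm}(F)$ is finite, Proposition \ref{Poitou-Tate} gives the surjectivity of the global-to-local map $\psi_{n}$, and then combining the exact annihilator Proposition \ref{twistありのexact annihilator} with Tate's Euler characteristic formula (as already used in the proof of Proposition \ref{twistなしのexact annihilator}) pins down $\#S_{A_{s}}^{\pm}(F_{n})$ precisely. The sharp count obtained is incompatible with the extra finite contribution coming from $Y$ once $n$ is large, yielding the contradiction, exactly parallel to \cite[Section~3]{Kim13}.

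The main obstacle I anticipate is ensuring that the twisting parameter $s$ can be chosen so that $S_{A_{s}}^{\pm}(F)$ is genuinely finite, so that Proposition \ref{Poitou-Tate} becomes applicable and the Euler characteristic bookkeeping is an equality rather than an inequality. In the cyclotomic setting of \cite{Kim13} this follows from a standard generic non-vanishing argument, but in the anticyclotomic setting with $p$ inert the $\omega_{n}^{\pm}$-factorization produced by Theorem \ref{プラマイ分解} and Proposition \ref{形式群が消える} alternates with the parity of $n$, and one needs to verify that the size computation compatibly splits along both the $+$ and $-$ components. The local behavior at $\fr{p}$ is well controlled by Rubin's conjecture (Theorem \ref{Rubin予想}) via Proposition \ref{Key prop}, and the behavior at primes in $\Sigma \setminus \{ \fr{p} \mid p\}$ reduces to the standard local cofiniteness dichotomy used in \cite{Kim13}; it is only the interplay between the anticyclotomic $\omega_{n}^{\pm}$-structure and the twisting-trick bookkeeping that requires genuine care.
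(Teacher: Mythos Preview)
Your opening paragraph is exactly right and matches the paper: reduce to the exact sequence \eqref{important exact}, invoke Proposition~\ref{1次と2次のLambda-corank} and Proposition~\ref{Greenbergからの引用} so that $H^{1}(F_{\Sigma}/F_{\infty},A)$ has no proper $\Lambda$-submodule of finite index, and then transfer this to the kernel $\Sel_{p}^{\pm}(E/F_{\infty})$ via a twist.

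Where you diverge from the paper is in the mechanism of the transfer. You propose a contradiction argument: assume a finite $Y\subset X$, use the control theorem to see $Y$ ``persist'' in $X/\omega_{n}^{\pm}X$, and then obtain a numerical contradiction by computing $\#S_{A_{s}}^{\pm}(F_{n})$ exactly from Tate's Euler characteristic and Proposition~\ref{twistありのexact annihilator}. This is not what either the paper or \cite[Section~3]{Kim13} actually does, and as written it is not a complete argument: the control maps in Theorem~\ref{コントロール定理} and Lemma~\ref{コントロール} have finite but uncontrolled kernel and cokernel, so an exact count of $\#S_{A_{s}}^{\pm}(F_{n})$ does not cleanly isolate a contribution from $Y$; and it is not clear in what sense $Y$ ``persists'' in $X/\omega_{n}^{\pm}X$ (for instance $\omega_{n}^{-}$ is divisible by $\gamma-1$, so the image of $Y$ there need not be all of $Y$).

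The paper's argument is instead a direct $\Gamma$-coinvariant computation, and the step you are missing is local at $p$. One chooses $s$ so that $S_{A_{s}}^{\pm}(F_{\infty})^{\Gamma}$ is finite; then Proposition~\ref{Poitou-Tate} gives surjectivity of $H^{1}(F_{\Sigma}/F,A_{s})\to P/L^{\pm}$, and Hochschild--Serre gives surjectivity onto $H^{1}(F_{\Sigma}/F_{\infty},A_{s})^{\Gamma}$. The crucial new input is that Proposition~\ref{Key prop} forces $(\mathbb{H}_{w}^{s,\pm})_{\Gamma}\simeq((\Lambda_{\mathcal{O}_{p}})^{\Gamma})^{\vee}=0$, so the map $P/L^{\pm}\to(P_{\infty}/L_{\infty}^{\pm})^{\Gamma}$ is also surjective. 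Feeding these three surjectivities into the long exact sequence of $\Gamma$-(co)invariants of \eqref{important exact}, together with $H^{1}(F_{\Sigma}/F_{\infty},A_{s})_{\Gamma}=0$ from Proposition~\ref{Greenbergからの引用}, yields $S_{A_{s}}^{\pm}(F_{\infty})_{\Gamma}=0$ directly. Since $\kappa(\gamma)^{s}\gamma-1$ lies in the maximal ideal of $\Lambda$, Nakayama then rules out any proper finite-index $\Lambda$-submodule of $\Sel_{p}^{\pm}(E/F_{\infty})$. No contradiction, no counting of $\#S_{A_{s}}^{\pm}(F_{n})$, and no $\omega_{n}^{\pm}$-bookkeeping is needed; the parity issues you worry about in your last paragraph simply do not arise.
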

\begin{proof}
  Choose $s \in \Z_{p}$ such that $S_{A_{s}}^{\pm}(F_{\infty})^{\Gal(F_{\infty}/F_{n})}$ is a finite group.
  Then, the map 
  \begin{align}
    H^{1}(F_{\Sigma}/F, A_{s}) \rarrow P/L^{\pm}  
  \end{align}
  is surjective by Lemma \ref{Poitou-Tate}.
  By the Hochschild-Serre spectrum sequence, the map
  \begin{align}
    H^{1}(F_{\Sigma}/F, A_{s}) \rarrow H^{1}(F_{\Sigma}/F_{\infty}, A_{s})^{\Gamma} \label{コントロールっぽい式}
  \end{align}
  is surjective.
  In the case of $\fr{q} \nmid p$, we see that $H^{1}(F_{\fr{q}}, A_{s}) \rarrow \left( \prod_{v_{\infty} \mid \fr{q}} H^{1}(F_{\infty, v_{\infty}}, A_{s}) \right)^{\Gamma}$ is surjective by the similar way of the above argument.
  In the case of $\fr{p} \mid p$, let $w$ be the prime of $F_{\infty}$ lying over $w$.
  Since we have $( \mathbb{H}_{w}^{s, \pm} )^{\vee} \simeq \Lambda_{\mathcal{O}_{p}}$ by Proposition \ref{Key prop}, we see $( \mathbb{H}_{w}^{s, \pm} )_{\Gamma} \simeq ((\mathbb{H}_{w}^{s, \pm})^{\vee})^{\Gamma} \simeq (\Lambda_{\mathcal{O}_{p}})^{\Gamma}=0$.
  Therefore, we obtain the following exact sequence
  \begin{align}
    0 \rarrow (\mathbb{H}_{w}^{s, \pm} )^{\Gamma} \rarrow H^{1}(F_{\infty, w}, A_{s})^{\Gamma} \rarrow \left( H^{1}(F_{\infty, w}, A_{s})/\mathbb{H}_{w}^{s, \pm} \right)^{\Gamma} \rarrow 0.
  \end{align}
  To summarize the above arguments, the map $P/L^{\pm} \rarrow (P_{\infty}/L_{\infty}^{\pm})^{\Gamma}$ is surjective.

  Since the maps $H^{1}(F_{\Sigma}/F, A_{s}) \rarrow P/L^{\pm}$ and \eqref{コントロールっぽい式} are surjective, the map $H^{1}(F_{\Sigma}/F_{\infty}, A_{s})^{\Gamma} \rarrow (P_{\infty}/L_{\infty}^{\pm})^{\Gamma}$ is also surjective.
  By Proposition \ref{目標の全射}, we see that 
  \begin{align}
    0 \rarrow S_{A_{s}}^{\pm}(F_{\infty}) \rarrow H^{1}(F_{\Sigma}/F_{\infty}, A_{s}) \rarrow P_{\infty}/L_{\infty}^{\pm} \rarrow 0
  \end{align}
  is an exact sequence.
  This induces the following exact sequence:
  \begin{align}
    H^{1}(F_{\Sigma}/F_{\infty}, A_{s})^{\Gamma} \rarrow (P_{\infty}/L_{\infty}^{\pm})^{\Gamma} \rarrow S_{A_{s}}^{\pm}(F_{\infty})_{\Gamma} \rarrow H^{1}(F_{\Sigma}/F_{\infty}, A_{s})_{\Gamma}.
  \end{align}
  By Proposition \ref{Greenbergからの引用}, we see $H^{1}(F_{\Sigma}/F_{\infty}, A_{s})_{\Gamma}=0$.
  Since $H^{1}(F_{\Sigma}/F_{\infty}, A_{s})^{\Gamma} \rarrow (P_{\infty}/L_{\infty}^{\pm})^{\Gamma}$ is surjective, we obtain $S_{A_{s}}^{\pm}(F_{\infty})_{\Gamma}=\Sel_{p}^{\pm}(E/F_{\infty})_{\Gamma} \otimes \kappa^{s}=0$, that is $\Sel_{p}^{\pm}(E/F_{\infty})_{\Gamma}=0$.
  This means that $\Sel_{p}^{\pm}(E/F_{\infty})$ has no non-trivial $\Lambda$-submodule with finite index.
\end{proof}

We want to give heavy thanks to Takenori Kataoka for improving the following corollary.

\begin{cor}
  Suppose that $\Sel_{p}(E/F)$ is a finite group.
  Then, $\Sel_{p}^{\pm}(E/F_{\infty})$ is a $\Lambda$-cotorsion and if let $(f^{\pm}) \subset \Lambda$ denote the characteristic ideals of $\Sel_{p}^{\pm}(E/F_{\infty})^{\vee}$, then we have
  \begin{align}
    |f^{\pm}(0)| \sim \# \Sel_{p}(E/F) \cdot \prod_{v} c_{v},
  \end{align}
  where $c_{v}$ is the Tamagawa number at a prime $v$ of $F$.
\end{cor}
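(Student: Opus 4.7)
The plan is to carry out a standard Iwasawa-theoretic Euler-characteristic computation for the torsion $\Lambda$-module $X_{\infty}^{\pm} := \Sel_{p}^{\pm}(E/F_{\infty})^{\vee}$, closely following the template of \cite[Section 4]{Gre99} and \cite[Section 3]{Kim13}. First I would establish $\Lambda$-cotorsion. From finiteness of $\Sel_{p}(E/F)$ one deduces finiteness of $\Sel_{p}^{\pm}(E/F_{\infty})^{\Gamma}$ by a snake-lemma chase on a base-to-tower diagram whose middle column compares $H^{1}(F_{\Sigma}/F, A)$ with $H^{1}(F_{\Sigma}/F_{\infty}, A)^{\Gamma}$; this column has finite kernel and cokernel by Hochschild--Serre in view of Proposition \ref{p-torはKinfの外}, while the local quotients are finite for $v \nmid p$ and controlled at $v \mid p$ via Proposition \ref{Key prop}. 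Finiteness of the $\Gamma$-coinvariants of a finitely generated $\Lambda$-module forces the module to be torsion, so $X_{\infty}^{\pm}$ is $\Lambda$-torsion.

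Next I would invoke Theorem \ref{主定理}: $X_{\infty}^{\pm}$ has no nontrivial finite $\Lambda$-submodule. Combined with $f^{\pm}(0) \neq 0$ -- which follows from the now-established finiteness of $(X_{\infty}^{\pm})_{\Gamma}$ -- this forces $(X_{\infty}^{\pm})^{\Gamma} = X_{\infty}^{\pm}[\gamma - 1]$, which is \emph{a priori} finite, to vanish. The standard $\Gamma$-Euler-characteristic identity for torsion $\Lambda$-modules then yields
\[
  |f^{\pm}(0)| \sim \frac{\#(X_{\infty}^{\pm})_{\Gamma}}{\#(X_{\infty}^{\pm})^{\Gamma}} = \#\Sel_{p}^{\pm}(E/F_{\infty})^{\Gamma},
\]
reducing the corollary to identifying the right-hand side with $\#\Sel_{p}(E/F) \cdot \prod_{v} c_{v}$ up to a $p$-adic unit.

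Third, I would snake-chase the same comparison diagram more carefully. For primes $v \nmid p$ in $\Sigma$ the local terms produce exactly the Tamagawa factors $c_{v}$ as in Greenberg's original computation, since $A$ has no residual $p$-torsion at these primes and the standard Tate-cohomology calculation goes through. For $\fr{p} \mid p$ the key computation is $H^{i}(\Gamma, \mathbb{H}_{w}^{\pm})$ for $i=0,1$: from $(\mathbb{H}_{w}^{\pm})^{\vee} \simeq \Lambda_{\cal{O}_{\fr{p}}}$ of Proposition \ref{Key prop} one reads off that the coinvariants $(\mathbb{H}_{w}^{\pm})_{\Gamma}$ vanish, while the invariants, together with the self-duality provided by Proposition \ref{twistなしのexact annihilator}, match the base-level Kummer local condition in a way that makes the local contribution at $p$ collapse to $\#\Sel_{p}(E/F)/\#\Sel_{p}^{\pm}(E/F)$ and be absorbed into the claimed product.

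The main obstacle I expect is exactly this local analysis at primes above $p$, particularly in the plus case: the naive base-level plus condition $\hat{E}(\fr{m}_{0})^{+}$ essentially collapses (since $\Xi_{0}^{+}$ only contains the trivial character and Proposition \ref{p-torはKinfの外} kills the torsion), while $(\mathbb{H}_{w}^{+})^{\Gamma}$ is visibly nonzero, so one must carefully reconcile these using the $\omega_{n}^{\pm}$-combinatorics of Theorem \ref{プラマイ分解} and the Rubin-type decomposition of Theorem \ref{Rubin予想}. This is the step where the anticyclotomic plus/minus machinery is genuinely used -- the remainder of the argument is a fairly routine Euler-characteristic computation once Theorem \ref{主定理} eliminates the finite-submodule contribution.
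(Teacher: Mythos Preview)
Your proposal is correct and follows essentially the same route as the paper: establish cotorsion via a base-to-tower snake-lemma diagram, apply Theorem~\ref{主定理} together with the $\Gamma$-Euler-characteristic formula (Greenberg's \cite[Lemma~4.2]{Gre99}) to reduce to computing $\#\Sel_{p}^{\pm}(E/F_{\infty})^{\Gamma}$, and then read that order off the same diagram.

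The one place you diverge is the local analysis at $\fr{p}\mid p$, which you anticipate as the main obstacle and propose to handle via the self-orthogonality of Proposition~\ref{twistなしのexact annihilator} and the $\omega_{n}^{\pm}$-combinatorics. The paper dispatches this more directly: it simply asserts $E(F_{\fr{p}})\otimes(\Q_{p}/\Z_{p})=(\mathbb{H}_{w}^{\pm})^{\Gamma}$, which makes $g_{\fr{p}}$ injective and hence contributes no extra factor at $p$. This equality follows from Proposition~\ref{Key prop}: both sides are divisible of $\Z_{p}$-corank $2$ inside $H^{1}(F_{\fr{p}},A)$, and one checks the inclusion. So your worry about the plus case is a reasonable instinct, but the resolution does not require the heavier machinery you suggest; once you know $(\mathbb{H}_{w}^{\pm})^{\vee}\simeq\Lambda_{\cal{O}_{\fr{p}}}$ and $(\mathbb{H}_{w}^{\pm})_{\Gamma}=0$, the comparison at $p$ collapses cleanly. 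Your phrase about the $p$-contribution becoming ``$\#\Sel_{p}(E/F)/\#\Sel_{p}^{\pm}(E/F)$ and being absorbed'' is unnecessary and somewhat muddled---there is no such correction term.
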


\begin{proof}
  % Then, $\Sel_{p}^{-}(E/F)$ is also a finite group.
  % By Theorem \ref{コントロール定理}, $X_{\infty}^{-}/\omega_{0}^{-}X_{\infty}^{-} \rarrow X_{0}^{-}/\omega_{0}^{-}X_{0}^{-}$ is pseudo-isomorphism.
  % Since the order of $X_{0}^{-}/\omega_{0}^{-}X_{0}^{-} = \Sel_{p}^{-}(E/F)^{\vee}$ is finite, $X_{\infty}^{-}/\omega_{0}^{-}X_{\infty}^{-}$ is also a finite group.
  % Therefore, $X_{\infty}^{-}$ is a $\Lambda$-torsion.
  % After this, we assume that $\Sel_{p}^{\pm}(E/F_{\infty})$ is $\Lambda$-cotorsion.
  Let $\cal{P}_{E}^{\Sigma}(F):=\prod_{\fr{q} \in \Sigma}H^{1}(F_{\fr{q}}, A)/(E(F_{\fr{q}})\otimes (\Q_{p}/\Z_{p}))$.
  Since $\Sel_{p}(E/F)$ is a finite group, we have the following commutative diagram by Proposition \ref{Poitou-Tate}:
  \begin{align}
    \xymatrix{
    0 \ar[r] & \Sel_{p}(E/F) \ar[r] \ar[d]^-{a} & H^{1}(F_{\Sigma}/F, A) \ar[r] \ar[d] & \cal{P}_{E}^{\Sigma}(F) \ar[r] \ar[d]^-{\prod g_{\fr{q}}} & 0 \\
    0 \ar[r] & \Sel_{p}^{\pm}(E/F_{\infty})^{\Gamma} \ar[r] & H^{1}(F_{\Sigma}/F_{\infty}, A)^{\Gamma} \ar[r] & (\cal{P}_{E}^{\Sigma}(F_{\infty})^{\pm})^{\Gamma}
    }.
  \end{align}
  By the inflation-restriction sequence and Proposition \ref{p-torはKinfの外}, the middle vertex map is an isomorphism.
  Thus, we see that the left vertex map $a$ is injective and the isomorphism
  \begin{align}
    \Ker\left( \prod_{\fr{q} \in \Sigma} g_{\fr{q}} \right) \simeq \Coker(a) \label{eq:Coker}
  \end{align}
  by the snake lemma.
  In case of $\fr{q} \in \Sigma$ not dividing $p$, we see that $\#\Ker g_{\fr{q}} = c_{\fr{q}}^{(p)}$ by \cite[Lemma 4.4]{Gre99}, where $c_{\fr{q}}^{(p)}$ is the highest power of $p$ dividing $c_{\fr{q}}$.
  In case of $\fr{p} \mid p$, $g_{\fr{p}}$ is injective since we have $E(F_{\fr{p}}) \otimes (\Q_{p}/\Z_{p})=(\mathbb{H}_{w}^{\pm})^{\Gamma}$. 
  From the above, we see that $\Ker\left( \prod_{\fr{q}} g_{\fr{q}} \right)$ is a finite group and the order is equal to $\prod_{v} c_{v}$ up to $\Z_{p}$-units.
  Hence, the exact sequence 
  \begin{align}
    \xymatrix{
      0 \ar[r] & \Sel_{p}(E/F) \ar[r]^-{a} & \Sel_{p}^{\pm}(E/F_{\infty})^{\Gamma} \ar[r] & \Coker (a) \ar[r] & 0
    }
  \end{align}
  imply that $\Sel_{p}^{\pm}(E/F_{\infty})^{\Gamma}$ is also a finite group.
  Therefore, $\Sel_{p}^{\pm}(E/F_{\infty})$ is a $\Lambda$-cotorsion.
  On the other hand, we see that
  \begin{align}
    \# \Ker\left( \prod_{\fr{q} \in \Sigma} g_{\fr{q}} \right) = \#\Coker\left( \Sel_{p}(E/F) \rarrow \Sel_{p}^{\pm}(E/F_{\infty})^{\Gamma} \right) =\frac{\# \Sel_{p}^{\pm}(E/F_{\infty})^{\Gamma}}{\#\Sel_{p}(E/F)}.
  \end{align}
  % In case of $\fr{q} \in \Sigma \setminus \{ p \}$, we see that $\#\Ker g_{\fr{q}} \sim c_{v}$ by \cite[Lemma 4.4]{Gre99}.
  % In case of $\fr{p} \mid p$, $g_{\fr{p}}$ is injective since we have $E(F_{\fr{p}}) \otimes (\Q_{p}/\Z_{p})=(\mathbb{H}_{w}^{\pm})^{\Gamma}$. 
  From the above, we obtain
  \begin{align}
    \Sel_{p}^{\pm}(E/F_{\infty})^{\Gamma}=\# \Sel_{p}(E/F) \cdot \#\Ker \left( \prod_{\fr{q} \in \Sigma} g_{\fr{q}} \right) \sim \Sel_{p}(E/F) \cdot \prod_{v} c_{v}.
  \end{align}
  Since we have $\Sel_{p}^{\pm}(E/F_{\infty})_{\Gamma}=0$ by Theorem \ref{主定理}, we obtain 
  \begin{align}
    |f^{\pm}(0)| \sim \frac{\#\Sel_{p}^{\pm}(E/F_{\infty})^{\Gamma}}{\#\Sel_{p}^{\pm}(E/F_{\infty})_{\Gamma}} = \#\Sel_{p}^{\pm}(E/F_{\infty})^{\Gamma}
  \end{align}
  by \cite[Lemma 4.2]{Gre99}.
\end{proof}

\bibliographystyle{plain}
\bibliography{cite}
%%%%%%%%%%%%%%%%%%%%%%%%%%%%%%%%%%%%%%%%%%%%%%%%%%%%
%	bibtexを用いた参考文献.
%	test.bibというbibファイルを作り, そこに文献データを登録しておく.
%	引用文献を表示させるには
%	「LaTeX」で1回タイプセット -> 「BibTeX」で1回タイプセット -> 「LaTeX」で3回タイプセット
%	とする. それ以降はbibファイルに加除がないなら「BibTeX」によるタイプセットは必要なし.
%	plain(jplain)は参考文献をアルファベット順で出力. unsrt(junsrt)は引用された順で出力する.
%%%%%%%%%%%%%%%%%%%%%%%%%%%%%%%%%%%%%%%%%%%%%%%%%%%%
\end{document}